\numberwithin{equation}{section}
\numberwithin{figure}{section}
\theoremstyle{plain}
\newtheorem{thm}{\protect\theoremname}[section]
\theoremstyle{plain}
\newtheorem{corollary}[thm]{Corollary}
\theoremstyle{definition}
\theoremstyle{plain}
\newtheorem{lem}[thm]{\protect\lemmaname}
\newtheorem{lemma}[thm]{Lemma}
\theoremstyle{remark}
\newtheorem{rem}[thm]{\protect\remarkname}
\theoremstyle{definition}
\newtheorem*{claim*}{Claim}
\theoremstyle{remark}
\theoremstyle{definition}
\newtheorem*{defn*}{\protect\definitionname}
\providecommand{\definitionname}{Definition}
\providecommand{\lemmaname}{Lemma}
\providecommand{\propositionname}{Proposition}
\providecommand{\remarkname}{Remark}
\providecommand{\theoremname}{Theorem}
\newcommand{\Rmnum}[1]{\expandafter\@slowromancap\romannumeral #1@}
\newcommand{\M}{{\mathcal M}}
\newcommand{\sfT}{{\mathsf T}}
\newcommand{\8}{\infty}
\newcommand{\la}{\langle}
\newcommand{\ra}{\rangle}
\newcommand{\be}{\begin{eqnarray*}}
	\newcommand{\ee}{\end{eqnarray*}}
\newcommand{\beq}{\begin{equation}}
\newcommand{\eeq}{\end{equation}}
\newcommand{\beqn}{\begin{equation*}}
\newcommand{\eeqn}{\end{equation*}}
\newcommand{\bs}{\begin{split}}
	\newcommand{\es}{\end{split}}
\renewcommand{\a}{\alpha}
\newcommand\norm[1]{ \left\| #1 \right\| }
\newcommand\jdz[1]{ \left| #1 \right| }
\newcommand\sk[1]{ \left( #1 \right) }
\newcommand\mk[1]{ \left[ #1 \right] }
\newcommand\pdt{ \frac{\partial }{\partial t} }
\renewcommand\d{{\rm d}}
\begin{document}

	
	
\title[From the Littlewood-Paley-Stein inequality to the Burkholder-Gundy inequality]{From the Littlewood-Paley-Stein inequality to the Burkholder-Gundy inequality}

\thanks{{\it 2000 Mathematics Subject Classification:} Primary: 47D07, 60G42. Secondary: 46B09, 46L99}
\thanks{{\it Key words:} Classical and noncommutative symmetric diffusion semigroup, Littlewood-Paley-Stein inequality, Burkholder-Gundy inequality, square functions, martingale, optimal orders}

\author[Zhendong  Xu]{Zhendong Xu}
\address{
	Laboratoire de Math{\'e}matiques, Universit{\'e} de Bourgogne Franche-Comt{\'e}, 25030 Besan\c{c}on Cedex, France}
\email{xu.zhendong@univ-fcomte.fr}

\author[Hao Zhang]{Hao Zhang}
\address{
	Laboratoire de Math{\'e}matiques, Universit{\'e} de Bourgogne Franche-Comt{\'e}, 25030 Besan\c{c}on Cedex, France}
\email{hao.zhang@univ-fcomte.fr}
\date{}
\maketitle

\begin{abstract}
	Let  $\{\mathsf{T}_t\}_{t>0}$ be a symmetric diffusion semigroup on a $\sigma$-finite measure space $(\Omega, \mathscr{A}, \mu)$ and $G^{\mathsf{T}}$ the associated  Littlewood-Paley $g$-function operator:
	$$G^{\mathsf{T}}(f)=\Big(\int_0^\infty \left|t\frac{\partial}{\partial t} \mathsf{T}_t(f)\right|^2\frac{\mathrm{d}t}{t}\Big)^{\frac12}.$$
	The classical Littlewood-Paley-Stein inequality asserts that for any $1<p<\infty$ there exist two positive constants $\mathsf{L}^{\mathsf{T}}_{p}$ and $\mathsf{S}^{\mathsf{T}}_{ p}$ such that 
	$$
	\big(\mathsf{L}^{\mathsf{T}}_{ p}\big)^{-1}\big\|f-\mathrm{F}(f)\big\|_{p}\le \big\|G^{\mathsf{T}}(f)\big\|_{p}
	\le  \mathsf{S}^{\mathsf{T}}_{p}\big\|f\big\|_{p}\,,\quad \forall f\in L_p(\Omega),
	$$
	 where $\mathrm{F}$ is the projection from $L_p(\Omega)$ onto the fixed point subspace of  $\{\mathsf{T}_t\}_{t>0}$ of $L_p(\Omega)$.
	 
	 Recently, Xu proved that $	\mathsf{L}^{\mathsf{T}}_{ p}\lesssim p$ as $p\rightarrow\8$, and raised the problem abut the optimal order of $	\mathsf{L}^{\mathsf{T}}_{ p}$ as $p\rightarrow\8$. We solve Xu's open problem by showing that this upper estimate of $\mathsf{L}^{\mathsf{T}}_{ p}$ is in fact optimal. Our argument is based on the construction of a special symmetric diffusion semigroup associated to any given martingale such that its square function $G^{\mathsf{T}}(f)$ for any $f\in L_p(\Omega)$ is pointwise comparable with the martingale square function of $f$. Our method also extends to the vector-valued and noncommutative setting.
\end{abstract}


\section{Introduction}\label{Introduction}
  This article follows the recent investigation of the Littlewood-Paley-Stein theory carried out by Xu. In his paper \cite{Xu1}, Xu has developed a new powerful method to study the vector-valued Littlewood-Paley-Stein theory for semigroups. One interesting benefit of this new method is the fact that it often yields the optimal orders of the relevant best constants. 
  
  Let $\{\mathsf{T}_t\}_{t> 0}$ be  a symmetric diffusion semigroup on a measure space $(\Omega, \mathscr{A}, \mu)$ and $\{\mathsf{P}_t\}_{t>0}$ its subordinated Poisson semigroup defined as follows
  $$ \mathsf{P}_{t}(f)=\frac{1}{\sqrt{\pi}} \int_{0}^{\infty} \frac{e^{-s}}{\sqrt{s}} \sfT_{\frac{t^{2}}{4 s}}(f) d s. $$
  Let $G^\mathsf{P}(f)$ denote the Littlewood-Paley $g$-function of $f\in L_p(\Omega)$ defined by
  \begin{equation}\label{e1.2}
  G^\mathsf{P}(f)=\left(\int_{0}^{\infty}\left|t \frac{\partial}{\partial t} \mathsf{P}_{t}(f)\right|^{2} \frac{d t}{t}\right)^{\frac{1}{2}} . 
  \end{equation}
  Then Stein's celebrated Littlewood-Paley inequality states that for every $1<p<\infty$ there exist two positive constants $\mathsf{L}^{\mathsf{P}}_{p}$ and $\mathsf{S}^{\mathsf{P}}_{ p}$ such that 
  $$
  \big(\mathsf{L}^{\mathsf{P}}_{ p}\big)^{-1}\big\|f-\mathrm{F}(f)\big\|_{p}\le \big\|G^{\mathsf{P}}(f)\big\|_{p}
  \le  \mathsf{S}^{\mathsf{P}}_{p}\big\|f\big\|_{p}\,,\quad \ \forall f\in L_p(\Omega),
  $$
  where $\mathrm{F}$ is the projection from $L_p(\Omega)$ onto the fixed point subspace of  $\{\mathsf{P}_t\}_{t>0}$ of $L_p(\Omega)$.
  
  \
  
  Note that Stein's proof does not give the optimal orders of the constants $\mathsf{L}^{\mathsf{P}}_{ p}$ and $\mathsf{S}^{\mathsf{P}}_{p}$. Although the principal objective of \cite{Xu1} concerns the vector-valued case, Xu has proved the following estimates on $\mathsf{L}^{\mathsf{P}}_{ p}$ and $\mathsf{S}^{\mathsf{P}}_{p}$:
  $$ \mathsf{L}^{\mathsf{P}}_p\lesssim \max\{p, p'^{\frac{1}{2}}\} \ \ \ \text{and} \ \ \   \mathsf{S}^{\mathsf{P}}_p\lesssim \max\{p^{\frac{1}{2}}, p'\}. $$
  Moreover, he has shown that the estimate above on $\mathsf{S}^{\mathsf{P}}_{p}$ is optimal as $p\rightarrow 1$ and $p\rightarrow\8$ since it is so already for the classical Poisson semigroup on $\mathbb{R}$.
  
  \
  
  However, the corresponding problem for $\mathsf{L}^{\mathsf{P}}_{ p}$ is left open in \cite{Xu1}. Note that when $\mathsf{P}$ is the classical Poisson semigroup on $\mathbb{R}^d$, one has $\mathsf{L}^{\mathsf{P}}_{ p}\approx 1$ as $p\rightarrow 1$ (see \cite{Xu4}).
  
  In this article, we show that Xu's above estimate on $\mathsf{L}^{\mathsf{P}}_{ p}$ is in fact optimal as $p\rightarrow\8$, namely,  $\mathsf{L}^{\mathsf{P}}_p\gtrsim p$ as $p\rightarrow\8$. In fact, we will
  prove the stronger inequality $\mathsf{L}^{\mathsf{T}}_p\gtrsim p$ as $p\rightarrow\8$ for a symmetric diffusion semigroup $\{\sfT_t\}_{t>0}$. Thus it only remains to determine the optimal order of  $\mathsf{L}^{\mathsf{P}}_p$ as $p\rightarrow1$.
  
  Our proof relies on the deep relationship between martingale inequalities and Littlewood-Paley-Stein inequalities inspired by a result of Neveu. He establishes a profound link between the martingale theory and the ergodic theory in \cite{NJ}. Indeed, Neveu shows that Doob's maximal inequality for martingale can be obtained from Dunford-Schwartz's maximal ergodic theorem. We will prove the analogous result for the square function.
  
    \  
    
    We recall the definition of symmetric diffusion semigroups in Stein's sense \cite[Chapter 3, Section 1]{St}. Let $(\Omega, \mathscr{A}, \mu)$ be a $\sigma$-finite measure space. $\left\{\mathsf{T}_{t}\right\}_{t>0}$ is called a symmetric
	diffusion semigroup on $(\Omega, \mathscr{A}, \mu)$ if $\left\{\mathsf{T}_{t}\right\}_{t>0}$ satisfies the following conditions: \\
	(a) $\mathsf{T}_{t}$ is a contraction on $L_{p}(\Omega)$ for every $1 \leq p \leq \infty$;\\
	(b) $\mathsf{T}_{t} \mathsf{T}_{s}=\mathsf{T}_{t+s}$ for positive $t$ and $s$;\\
	(c) $\lim_{t \rightarrow 0^+} \mathsf{T}_{t}(f)=f$ in $L_{2}(\Omega)$ 
	for every $f \in L_{2}(\Omega)$;\\
	(d) $\mathsf{T}_{t}$ is positive (i.e. positivity preserving);\\
	(e) $\mathsf{T}_{t}$ is selfadjoint on $L_{2}(\Omega)$;\\
	(f) $\mathsf{T}_{t}(1)=1$.
	
Recall that any symmetric diffusion semigroup $\left\{\mathsf{T}_{t}\right\}_{t>0}$ is analytic on $L_p(\Omega)$  for $1<p<\infty$ which implies that $\frac{\partial}{\partial t} \mathsf{T}_{t}(f)$ is well-defined (see \cite[Chapter 3, Section 2]{St}). As a result, we define $G^{\mathsf{T}}(f)$ by
$$  G^\mathsf{T}(f)=\left(\int_{0}^{\infty}\left|t \frac{\partial}{\partial t} \mathsf{T}_{t}(f)\right|^{2} \frac{d t}{t}\right)^{\frac{1}{2}} . $$
We often call $G^\mathsf{T}(f)$ the Littlewood-Paley $g$-function of $f$, or call  $G^\mathsf{T}(f)$ the square function for semigroups for simplicity.
	
	\
	
	 We will  use the following convention:  $A\lesssim B$ (resp. $A\lesssim_\a B$) means that $A\le C B$ (resp. $A\le C_\a B$) for some absolute positive constant $C$ (resp. a positive constant $C_\a$ depending only on a parameter $\a$).  $A\approx B$ or  $A\approx_\a B$ means that these inequalities as well as their inverses hold.
	 
	 \
	
	It is a classical fact that the orthogonal projection $F$ from $L_{2}(\Omega)$ onto the fixed point subspace of $\left\{\mathsf{T}_{t}\right\}_{t>0}$ extends to a contractive projection on $L_{p}(\Omega)$ for every $1 \leq p \leq \infty$. Then $\mathrm{F}$ is also positive and $\mathrm{F}\left(L_{p}(\Omega)\right)$ is the fixed point subspace of $\left\{\mathsf{T}_{t}\right\}_{t>0}$ on $L_{p}(\Omega)$. Stein's celebrated extension of the classical Littlewood-Paley inequality \cite{St} asserts that for every symmetric diffusion semigroup
	$\left\{\mathsf{T}_{t}\right\}_{t>0}$ and every $1<p<\infty$
	\begin{equation}\label{1.1}
	\|f-\mathrm{F}(f)\|_{L_{p}(\Omega)} \approx_{p}\left\|G^\mathsf{T}(f)\right\|_{L_{p}(\Omega)}, \quad \ \forall f \in L_{p}(\Omega).
	\end{equation}
	
	Stein proves the above inequality (\ref{1.1}) by virtue of Burkholder-Gundy's martingale inequality and complex interpolation. He establishes a close connection between semigroup theory and martingale theory and studies semigroups through martingale theory via Rota's dilation. We will proceed in a reverse way by studying martingales by semigroups.
	
	\
	
	We consider the square function in the martingale setting. Throughout this article we will work with a fixed probability
	space $(\Omega, \mathscr{A}, \mu)$ and a sequence of $\sigma$-algebras
	$$ \mathscr{A}_1\subset \mathscr{A}_2\subset\dots\subset \mathscr{A}_n\subset \dots \subset \mathscr{A} $$
	such that $\cup_{n=1}^{\infty}\mathscr{A}_n=\mathscr{A}$.
	For a random variable $f\in L_{1}(\Omega, \mathscr{A}, \mu)$ we will set $n\in \mathbb{N}^* $
	$$
    E_n(f)=E\left(f \mid{\mathscr{A}}_{n}\right) , \quad d E_n(f)=E_n(f)-E_{n-1}(f),
	$$
	where $E_0=0$ as convention.
	The square function of the martingale $\{E_n\}_{n\geq 1}$ is 
	$$ S(f)=\left(\sum_{n=1}^{\infty}|d E_n(f)|^2\right)^\frac{1}{2}. $$
	 See \cite{GA} for more information on martingale inequalities.
	 
	\
	
	We introduce the most important operator of the article associated to martingales. Given a strictly increasing sequence $\{a_n\}_{n\geq 0}$ with $a_0=0$ and $\lim\limits_{n\rightarrow\infty}a_n=1$, define
	\begin{equation}\label{5.1}
	T=\sum\limits_{n= 1}^{\infty}(a_n-a_{n-1}) E_n.
	\end{equation}
	 Then $T$ is a positive contraction on $L_p(\Omega, \mathscr{A}, \mu)$ for $1\leq p\leq \infty$. In particular, for $p=2$, $T$ is positive and selfadjoint on $L_2(\Omega, \mathscr{A}, \mu)$. We use $T$ to produce a symmetric diffusion semigroup.
	
	\
	
	\begin{thm}\label{thm1.1}
		If we denote by $T^t$ the operator obtained by continuous functional calculus, then for $t>0$ 
		\begin{equation}\label{ee3}
		T^t=\sum\limits_{n= 1}^{\infty}(1-a_{n-1})^td {E}_n=\sum\limits_{n= 1}^{\infty}\left[(1-a_{n-1})^t-(1-a_n)^t\right]{E}_n,
		\end{equation}
		and $T^t$ is a positive contraction for any $t>0$ on $L_p(\Omega, \mathscr{A}, \mu)\ (1\leq p\leq \8)$. Moreover, $\{T^t\}_{t>0}$ is a symmetric diffusion semigroup.
		\end{thm}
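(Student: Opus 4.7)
The plan is to diagonalise $T$ simultaneously with the martingale differences and then read off every property of $T^t$ from its spectral decomposition, converting back into a convex combination of conditional expectations at the end in order to transfer the analysis from $L_2$ to all $L_p$.

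First I would rewrite the definition of $T$ by interchanging summations. Using $E_n=\sum_{k=1}^{n} dE_k$ (recall $E_0=0$) together with Abel's rearrangement, one gets
\[
T=\sum_{n=1}^\infty (a_n-a_{n-1})\sum_{k=1}^{n} dE_k=\sum_{k=1}^\infty(1-a_{k-1})\,dE_k.
\]
Because $\bigcup_n \mathscr{A}_n=\mathscr{A}$, the martingale convergence theorem gives $\sum_{k=1}^\infty dE_k=I$ strongly on $L_2(\Omega)$, and since the $dE_k$'s are mutually orthogonal selfadjoint projections, this is the spectral resolution of $T$ on $L_2$. In particular $T$ is selfadjoint with $\mathrm{spec}(T)\subset(0,1]$, so the continuous functional calculus is available and yields
\[
T^t=\sum_{k=1}^\infty(1-a_{k-1})^t\,dE_k,\qquad t>0.
\]

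Next I would perform Abel summation in reverse to obtain the second formula in \eqref{ee3}. Since $(1-a_k)^t\to 0$ as $k\to\infty$ and each $E_k$ is bounded on every $L_p$, the boundary term disappears and one gets
\[
T^t=\sum_{k=1}^\infty\bigl[(1-a_{k-1})^t-(1-a_k)^t\bigr]E_k.
\]
This representation is the key to moving beyond $L_2$: the coefficients $c_k(t):=(1-a_{k-1})^t-(1-a_k)^t$ are strictly positive because $\{a_n\}$ is increasing and $x\mapsto x^t$ is monotone, and they telescope to $\sum_k c_k(t)=(1-a_0)^t=1$. Thus $T^t$ is a convex combination of the $L_p$-contractions $E_k$, each of which is positive, unital and selfadjoint on $L_2$, so $T^t$ inherits all these properties. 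Agreement of the two formulas for $T^t$ on general $L_p$ follows from the density of $L_2\cap L_p$ (or of bounded martingales).

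To finish, the semigroup law $T^{t+s}=T^tT^s$ is immediate from the spectral form together with orthogonality of the $dE_k$'s, and strong continuity at $0$ on $L_2$ comes from dominated convergence: for $f\in L_2$, $\|T^tf-f\|_2^2=\sum_k\bigl|(1-a_{k-1})^t-1\bigr|^2\|dE_kf\|_2^2\to 0$ as $t\to 0^+$, since $\|dE_kf\|_2^2$ is summable and each summand vanishes in the limit. Altogether conditions (a)--(f) of Stein's definition are satisfied.

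I do not expect a genuine obstacle in this argument. The only point requiring mild care is justifying that the two sums displayed in \eqref{ee3} define the same operator on every $L_p$, $1\le p\le\infty$: on $L_2$ this is just Abel summation inside the spectral calculus, and for $p\ne 2$ one either argues by density of $L_2\cap L_p$ or, equivalently, verifies directly that the convex-combination formula gives a bounded positive operator on $L_p$ whose restriction to $L_2$ coincides with $T^t$.
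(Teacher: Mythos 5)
Your argument is correct and follows essentially the same route as the paper: Abel rearrangement to the martingale-difference form, continuous functional calculus for $T^t$, the telescoping convex-combination representation to get positivity and contractivity on every $L_p$, and a dominated-convergence computation for strong continuity at $t=0$ (which the paper instead abstracts into its Lemma~2.1 together with an injectivity check for $T$ on $L_2$). One minor inaccuracy: the claim $\mathrm{spec}(T)\subset(0,1]$ need not hold, since $1-a_{k-1}\to 0$ forces $0$ to be a cluster point of eigenvalues and hence generically in the spectrum, but this is harmless because $x\mapsto x^t$ is continuous on all of $[0,1]$ for $t>0$.
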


		The above semigroup $\{T^t\}_{t>0}$ is exactly our desired semigroup generated by martingale. Then we select some specific sequence of $\{a_n\}_{n\geq 0}$ and use this particular semigroup to obtain the best order of $\mathsf{L}^\mathsf{T}_p$. Below is one of our principal theorem, and it is the square function analogue of Neveu's result \cite{NJ}. Note that $\mathrm{F}=E_1$.
	
\begin{thm}\label{Theorem A}
		If $ a_{n}=1-\mathrm{e}^{-16^{n+1}}$ for $n\geq 1$, then the semigroup $\{T^t\}_{t>0}$ defined in Theorem \ref{thm1.1} satisfies
		$$	\sqrt{\dfrac{7}{60}}\ S(f-\mathrm{F}(f))\leq G^T(f)\leq \sqrt{\dfrac{23}{60}}\ S(f-\mathrm{F}(f)), \ \ \ \forall f\in L_1(\Omega, \mathscr{A}, \mu). $$
	    Consequently, for $1\leq p\leq \infty$
		$$ \sqrt{\dfrac{7}{60}}\ \| S(f-\mathrm{F}(f))\|_p\leq \|G^T(f)\|_p\leq  \sqrt{\dfrac{23}{60}}\ \| S(f-\mathrm{F}(f))\|_p, \ \ \ \forall f\in L_p(\Omega, \mathscr{A}, \mu).  $$
		\end{thm}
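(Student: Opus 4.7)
The plan is to reduce the $g$-function to a pointwise quadratic form in the martingale differences and then bound its off-diagonal part by a geometric series. From Theorem \ref{thm1.1},
\[ t\frac{\partial}{\partial t} T^t(f) = \sum_{n\ge 1} t(1-a_{n-1})^t \ln(1-a_{n-1})\, dE_n(f). \]
With $a_n = 1 - e^{-16^{n+1}}$ the factor $\ln(1-a_0) = \ln 1 = 0$ kills the $n=1$ term, while for $n\ge 2$ we have $1-a_{n-1} = e^{-\lambda_n}$ with $\lambda_n := 16^n$. Hence
\[ t\frac{\partial}{\partial t} T^t(f) = -\sum_{n\ge 2} t\lambda_n e^{-t\lambda_n}\, dE_n(f), \]
and in particular $G^T(f) = G^T(f - \mathrm{F}(f))$, since the spectral representation of $T^t$ identifies the fixed-point subspace with the $\mathscr{A}_1$-measurable functions, i.e.\ $\mathrm{F}=E_1$.

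\medskip

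Squaring pointwise and integrating against $dt/t$ produces
\[ G^T(f)^2 = \sum_{n,m\ge 2} A_{nm}\, dE_n(f)\, dE_m(f), \qquad A_{nm} := \int_0^\infty t\lambda_n\lambda_m e^{-t(\lambda_n+\lambda_m)}\, dt = \frac{\lambda_n\lambda_m}{(\lambda_n+\lambda_m)^2}, \]
the swap of sum and integral being justified by Fubini and Cauchy--Schwarz whenever $S(f-\mathrm{F}(f))(x)<\infty$. The diagonal terms $A_{nn}=\tfrac14$ contribute exactly $\tfrac14 S(f-\mathrm{F}(f))^2$. Using $2|xy|\le x^2+y^2$ on the off-diagonal pairs, the problem reduces to the numerical estimate $\max_{k\ge 2} \sum_{j\ne k} A_{kj} \le \tfrac{2}{15}$. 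For $j\ne k$,
\[ A_{kj} = \frac{16^{-|k-j|}}{(1+16^{-|k-j|})^2}\le 16^{-|k-j|}, \]
so summing the two geometric tails on either side of $k$ gives $\sum_{j\ne k} A_{kj} \le 2\sum_{\ell\ge 1} 16^{-\ell} = \tfrac{2}{15}$. Combining, $\bigl|G^T(f)^2 - \tfrac14 S(f-\mathrm{F}(f))^2\bigr|\le \tfrac{2}{15}\, S(f-\mathrm{F}(f))^2$, and the identities $\tfrac14\pm\tfrac{2}{15} = \tfrac{15\pm 8}{60}$ yield the claimed constants $\sqrt{7/60}$ and $\sqrt{23/60}$. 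The $L_p$ statement is then immediate from the pointwise comparison.

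\medskip

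The only genuine obstacle is the balance between the off-diagonal decay rate of $A_{nm}$ and the diagonal value $\tfrac14$: for the lower bound $\tfrac14 - c > 0$ to survive, the ratio between consecutive $\lambda_n$ has to be large enough. The geometric base $16$ in the definition of $a_n$ is tuned precisely so that the off-diagonal correction $c=\tfrac{2}{15}$ stays strictly below $\tfrac14$, which is what delivers a positive lower constant in the pointwise comparison. All remaining steps are direct calculations; because we argue pointwise from the outset, the hypothesis $f\in L_1(\Omega,\mathscr{A},\mu)$ already makes every series used here well-defined almost everywhere.
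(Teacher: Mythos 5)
Your proposal is correct and follows essentially the same route as the paper: both reduce $G^T(f)^2$ to the quadratic form $\sum_{i,j\ge 2}\frac{\lambda_i\lambda_j}{(\lambda_i+\lambda_j)^2}\,dE_i(f)\,dE_j(f)$ with $\lambda_n=16^n$, note the diagonal value $\tfrac14$, and bound the off-diagonal contribution by the geometric sum $2\sum_{\ell\ge1}16^{-\ell}=\tfrac2{15}$, yielding the constants $\tfrac14\mp\tfrac2{15}=\tfrac7{60},\tfrac{23}{60}$. The only cosmetic difference is that the paper packages the diagonal-dominance step as the Gershgorin circle theorem applied to the symmetric matrix $B$ (after reducing to $\mathscr A_n$-measurable $f$), whereas you use $2|xy|\le x^2+y^2$ directly, which is the same estimate in disguise.
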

		\
		
\begin{rem}
	The choice of $\{a_n\}_{n\geq 1}$ is not unique. There are many other sequences of $\{a_n\}_{n\geq 1}$ satisfying the above pointwise inequalities with different constants, of course.
	\end{rem}
		
 Theorem \ref{Theorem A} shows that for any martingale, there exists a symmetric diffusion semigroup such that their corresponding square functions are equivalent. In this way, we can use the Littlewood-Paley-Stein inequality (\ref{1.1}) to show the analogous inequality for martingale square function. This means that the Burkholder-Gundy square function inequality for martingales can be deduced from the Littlewood-Paley-Stein inequality for semigroups. In this way, the optimal constants in the martingale square function inequality can be applied to the setting of semigroups.  

\

 Recently, Xu has shown the vector-valued Littlewood-Paley-Stein inequality for analytic symmetric diffusion semigroups via holomorphic functional calculus (see \cite{Xu1}). His method is optimal in the sense that it offers the optimal orders of the best constants. For the martingale cotype case, he shows that the classical Poisson semigroup meets the optimal orders (see \cite[Proposition 8.5]{Xu1}). Unfortunately, the optimal order of the reverse Littlewood-Paley-Stein inequality was left unsolved in \cite{Xu1}. In the following, we will utilize Theorem \ref{Theorem A} to give the optimal order of $\mathsf{L}^\mathsf{T}_p$ as $p\rightarrow\8$.

\

To this end, we recall the following Burkholder-Gundy inequality.
	\begin{thm}[{\cite[Theorem 3.1]{Bu}}] \label{thm1.5}
 Let $1<p<\infty$ and $p^*=\max\{p, \frac{p}{p-1}\}$. Then
$$
\left({p}^{*}-1\right)^{-1}\|S({f})\|_{{p}} \leq\|{f}\|_{{p}} \leq\left({p}^{*}-1\right)\|S({f})\|_{{p}}.
$$
In particular,
$$
\begin{array}{l}
\|f\|_{{p}} \geq({p}-1)\|S(f)\|_{{p}}\ \text { if } \quad 1<{p} \leq 2, \\
\|f\|_{{p}} \leq({p}-1)\|S(f)\|_{{p}}\ \text { if } \quad 2 \leq {p}<\infty.
\end{array}
$$
Moreover, the constant $p-1$ is best possible.
\end{thm}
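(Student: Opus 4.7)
The plan is to follow Burkholder's Bellman-function method, which yields the sharp constant $p^{*}-1$ in a single unified argument. The strategy is to introduce an auxiliary function $U_{p}\colon\mathbb{R}\times[0,\infty)\to\mathbb{R}$ that simultaneously (i) majorizes the integrand $|x|^{p}-(p^{*}-1)^{p}y^{p/2}$ pointwise, and (ii) is \emph{martingale-concave} in the sense that for every $\sigma$-algebra $\mathcal{F}$ and conditionally mean-zero increment $d$,
$$E\bigl[U_{p}(x+d,\,y+d^{2})\,\bigl|\,\mathcal{F}\bigr]\leq U_{p}(x,y).$$
Telescoping along the filtration gives $E[U_{p}(f_{n},S_{n}^{2}(f))]\leq U_{p}(f_{0},0)$, and combining with the majorization collapses the bound to $\|f\|_{p}\leq(p^{*}-1)\|S(f)\|_{p}$ in the regime $p\geq 2$.

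Explicitly, I would take, up to a normalizing constant $\alpha_{p}$,
$$U_{p}(x,y)=\alpha_{p}\bigl(|x|-(p^{*}-1)\sqrt{y}\bigr)\bigl(|x|+\sqrt{y}\bigr)^{p-1},$$
and verify both the pointwise majorization and the key two-variable inequality $U_{p}(x+h,y+h^{2})\leq U_{p}(x,y)+h\,\partial_{x}U_{p}(x,y)$, which upon conditional expectation yields the martingale-concavity. For the complementary range $1<p\leq 2$, I would invoke duality: the bilinear pairing $\langle f,g\rangle=\sum_{n}E(dE_{n}(f)\,dE_{n}(g))$ combined with Cauchy--Schwarz identifies the inequality at $p$ with the already-proven one at its conjugate exponent $p/(p-1)\geq 2$, so both bounds follow with the same sharp constant $p^{*}-1$.

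The main obstacle is the discovery and verification of $U_{p}$: the majorization and martingale-concavity are delicate algebraic inequalities, and essentially any other candidate function sacrifices the sharp constant. Establishing (ii) reduces, after fixing $(x,y)$, to a one-variable calculus problem in $h$ whose extremal configuration must be analyzed case by case depending on the sign of $|x|-(p^{*}-1)\sqrt{y}$. Finally, the sharpness clause \textbf{``the constant $p-1$ is best possible''} is a separate matter: it requires exhibiting an explicit martingale asymptotically saturating the inequality, and a standard choice is a suitable two-point random walk whose increments realize the equality case in the above tangent-line inequality for $U_{p}$, so that the telescoping estimate is tight in the limit.
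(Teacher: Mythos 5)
First, for the record: the paper itself gives no proof of Theorem \ref{thm1.5}; it is quoted from Burkholder \cite{Bu}, so your attempt can only be measured against Burkholder's argument. Your strategy is in fact his: after the substitution $s=\sqrt{y}$, your candidate $U_p$ is exactly Burkholder's special function $\alpha_p\bigl(|x|-(p^*-1)s\bigr)\bigl(|x|+s\bigr)^{p-1}$, and the jump inequality $U_p(x+h,y+h^2)\le U_p(x,y)+h\,\partial_x U_p(x,y)$ you propose to check is essentially his concavity lemma for that function on $\mathbb{R}\times\ell_2$, specialized to increments orthogonal to the past: applying the sharp $L_p$ inequality for differentially subordinate Hilbert-space-valued martingales to $g=\sum_k dE_k(f)\,e_k$ (so that $|g_n|=S_n(f)$) yields \emph{both} inequalities of the theorem, for every $1<p<\infty$, in one stroke, with no case distinction and no duality.

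The genuine gap in your plan is structural. Your Bellman step yields only $\|f\|_p\le(p^*-1)\|S(f)\|_p$ (and, as you set it up, only for $p\ge2$, although the same function works for all $1<p<\infty$). The opposite inequality $\|S(f)\|_p\le(p^*-1)\|f\|_p$ is never proved in any range, and the duality step cannot manufacture it: the pairing plus Cauchy--Schwarz gives $\|f\|_p=\sup_{\|g\|_{p'}\le1}\mathbb{E}(fg)\le\|S(f)\|_p\,\sup_{\|g\|_{p'}\le1}\|S(g)\|_{p'}$, i.e.\ it converts an upper bound for $\|S(\cdot)\|_{p'}$ in terms of $\|\cdot\|_{p'}$ into the right-hand inequality at $p$; it can never produce the left-hand inequality from the right-hand one. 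So, as written, three of the four asserted estimates (everything on the side $\|S(f)\|_p\lesssim\|f\|_p$, and hence also the $1<p\le2$ case you wanted by duality) are out of reach. The repair is either to run the argument a second time with the majorization of $y^{p/2}-(p^*-1)^p|x|^p$ (roles of $|x|$ and $\sqrt{y}$ interchanged), or to prove the Hilbert-valued subordination inequality once and apply it to $g$ as above. Two further caveats: the pointwise majorization and the two-variable inequality for $U_p$ are only asserted, and that is where all the work of the proof lies; and the sharpness of $p-1$ requires a carefully chosen extremal family --- conditionally symmetric examples (e.g.\ a symmetric two-point walk) can only exhibit growth of order $\sqrt{p}$, so the extremal martingales must be strongly asymmetric, and this order-$p$ sharpness is precisely what the present paper exploits in Corollary \ref{Theorem B}.
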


Using Theorem \ref{Theorem A} and Theorem \ref{thm1.5}, we immediately get the following corollary, which solves a problem left open by Xu in \cite{Xu1}.

	\begin{corollary}\label{Theorem B} 
	For any symmetric diffusion semigroup $\{\mathsf{T}_{t}\}_{t>0}$, and for $p\geq 2$
	\begin{equation}\label{equ1.2}
	 \|f-\mathrm{F}(f)\|_p\lesssim p\|G^\mathsf{T}(f)\|_p, \quad \forall f \in L_{p}(\Omega),
	 \end{equation}
	and $p$ is the optimal order of $\mathsf{L}^\mathsf{T}_p$.
	\end{corollary}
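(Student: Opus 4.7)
The plan is twofold. First, the upper inequality $\|f - \mathrm{F}(f)\|_p \lesssim p\,\|G^{\mathsf{T}}(f)\|_p$ for an arbitrary symmetric diffusion semigroup is already Xu's estimate from \cite{Xu1}, recalled in the introduction, so this half of the corollary is not something I need to re-prove. The genuine task is the optimality assertion: I must exhibit, for each large $p$, one concrete symmetric diffusion semigroup whose constant $\mathsf{L}^{\mathsf{T}}_p$ is at least of order $p$.

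My strategy is to import the sharpness of the Burkholder-Gundy constant $p-1$ from Theorem \ref{thm1.5} into the semigroup world through the pointwise comparison provided by Theorem \ref{Theorem A}. Fix $p \geq 2$ and $\epsilon > 0$. By the sharpness clause of Theorem \ref{thm1.5}, there exist a filtered probability space $(\Omega, \mathscr{A}, (\mathscr{A}_n)_{n \geq 1}, \mu)$ and a martingale extremizer $f \in L_p(\Omega)$ satisfying $E_1(f) = 0$ and $\|f\|_p \geq (p - 1 - \epsilon)\,\|S(f)\|_p$; the normalization $E_1(f) = 0$ is a routine reduction (replace $f$ by $f - E_1(f)$, under which the square function can only shrink since $dE_1$ is killed while $dE_n$ for $n \geq 2$ is unchanged, and the numerator remains of the same order). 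I then apply Theorem \ref{thm1.1} on the same space with the sequence $a_n = 1 - \mathrm{e}^{-16^{n+1}}$ of Theorem \ref{Theorem A} to produce a symmetric diffusion semigroup $\{T^t\}_{t > 0}$ whose fixed-point projection is precisely $E_1$ (a glance at \eqref{ee3} as $t \to \infty$ confirms this, the $n=1$ term surviving with coefficient one while all others decay to zero). Theorem \ref{Theorem A} then delivers $\|G^T(f)\|_p \leq \sqrt{23/60}\,\|S(f)\|_p$, and $\mathrm{F}(f) = E_1(f) = 0$.

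Chaining the two bounds yields
$$\|f - \mathrm{F}(f)\|_p = \|f\|_p \geq (p - 1 - \epsilon)\,\|S(f)\|_p \geq \sqrt{60/23}\,(p - 1 - \epsilon)\,\|G^T(f)\|_p,$$
so sending $\epsilon \to 0$ gives $\mathsf{L}^{T}_p \geq \sqrt{60/23}\,(p-1)$ for this specific symmetric diffusion semigroup, matching Xu's upper bound in order. I do not anticipate any serious obstacle here: once Theorem \ref{Theorem A} is in hand, the corollary is essentially a translation of a sharp martingale inequality into a sharp semigroup statement. The only point requiring a moment's care is the alignment of the fixed-point projection $\mathrm{F}$ with the first conditional expectation $E_1$, which is what forces the normalization $E_1(f) = 0$ in the extremizer above; but that normalization is a standard feature of the Burkholder-Gundy extremal examples and costs no order in $p$.
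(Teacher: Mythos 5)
Your proposal is correct and follows essentially the same route as the paper, which deduces the optimality of $\mathsf{L}^{\mathsf{T}}_p$ immediately from Theorem \ref{Theorem A} combined with the sharpness of the constant $p-1$ in Theorem \ref{thm1.5}, the upper bound \eqref{equ1.2} being quoted from \cite{Xu1}. Your extra care about normalizing the extremizer so that $E_1(f)=0$ is sound (since $\|E_1(f)\|_p\le\|S(f)\|_p$, subtracting $E_1(f)$ costs only an additive constant in $p$), so the argument goes through exactly as the paper intends.
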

	
	\begin{proof}[Proof of Corollary \ref{Theorem B}]
		Indeed, (\ref{equ1.2}) has already been proved in \cite[Theorem 8.1]{Xu1}. Then it suffices to show that $p$ is the optimal order of $\mathsf{L}^\mathsf{T}_p$, which is an immediate consequence of Theorem \ref{Theorem A} and Theorem \ref{thm1.5}.
	\end{proof}

\

Theorem \ref{Theorem A} can be extended to the vector-valued and noncommutative settings. We first consider the vector-valued case. Given a Banach space $X$, let $L_{p}(\Omega ; X)$ denote the $L_{p}$-space of strongly measurable $p$-integrable functions from $\Omega$ to $X .$ It is a well-known elementary fact that if $T$ is a positive bounded operator on $L_{p}(\Omega)$ with $1 \leq p \leq \infty$, then $T \otimes \operatorname{Id}_{X}$ is bounded on $L_{p}(\Omega ; X)$ with the same norm. For notational convenience, throughout this article, we will denote $T \otimes \operatorname{Id}_{X}$ by $T$ too. Let again $\mathrm{F}$ be the projection from $L_{p}(\Omega; X)$ onto the fixed point subspace of $\left\{T_{t}\right\}_{t>0}$. Thus $\left\{T_{t}\right\}_{t>0}$ is also a semigroup of contractions on $L_{p}(\Omega ; X)$ for any Banach space $X$ with $\mathrm{F}\left(L_{p}(\Omega ; X)\right)$ as its fixed point subspace.

\

For an $X$-valued $L_p$-martingale $\{E_n\}_{n\geq 1}$, define the corresponding $q$-variant of the martingale square function for $1\leq q<\8$ as follows
$$ S_q(f)=\left(\sum_{n=1}^{\infty}\|d E_n(f)\|_X^q\right)^\frac{1}{q}, \quad \forall f\in L_p(\Omega; X). $$
Given a strictly increasing sequence $ \{ a_n \}_{n\geqslant 0} $ with $ a_0 = 0 $ and $ \lim\limits_{n \to \infty} a_n = 1  $ as before, let $T^t$ be defined by (\ref{5.1}). 

Then $\left\{T^{t}\right\}_{t>0}$ extended to $L_{p}(\Omega ; X)$ remains to be a strongly continuous semigroup of contractions on $L_{p}(\Omega ; X)$, and $(\ref{ee3})$ remains valid in the $X$-valued case. Define for $q\geqslant 1$
$$
G_q^T(f) =\sk{ \int_{0}^{\infty} \norm{ t \pdt T^t(f) }_X^q \frac{\d t}{t} }^{\frac{1}{q}}, \quad
\forall f \in L_p(\Omega ; X) .
$$

\ 

The following theorem is the vector-valued analogue of Theorem \ref{Theorem A}:
\begin{thm}\label{thm2.1}
	Let $ X $ be a Banach space and $1\leq p,\ q<\8$. Then there exist a sequence $\{a_n\}_{n\geq 0}$ and two universal positive constants, $ c $ and $ C $ such that
	\begin{equation}\label{Gp-Sp}
	c S_q\sk{ f-{\rm F}(f) } \leqslant G_q^T(f) \leqslant C S_q\sk{ f-{\rm F}(f) }.
	\end{equation}
	for any $ f \in L_p(\Omega ; X) $.
\end{thm}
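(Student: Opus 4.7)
The plan is to reduce the claim to a translation-invariant Riesz-basis inequality for a Gumbel-type profile, and then to prove that inequality via interval averages combined with a diagonal-dominance perturbation argument. Because formula~\eqref{ee3} comes from the spectral decomposition of $T$ on $L_2(\Omega)$ and extends term-by-term to the vector-valued setting, differentiating it in $t$ gives, pointwise in $\omega \in \Omega$,
$$t\frac{\partial}{\partial t}T^t f \;=\; -\sum_{n\geq 2}\phi_{n-1}(t)\,dE_n(f),\qquad \phi_n(t):= tb_n e^{-tb_n},\ \ b_n:=-\log(1-a_n),$$
the $n=1$ term vanishing because $b_0=0$. Since $\mathrm{F} = E_1$, we have $dE_n(f-\mathrm F(f))=dE_n(f)$ for $n\geq 2$ and $dE_1(f-\mathrm F(f))=0$. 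Fixing $\omega$ and setting $v_n := dE_n(f)(\omega)\in X$, estimate~\eqref{Gp-Sp} reduces to the purely deterministic claim
$$c^q \sum_{n\geq 2}\|v_n\|_X^q \;\leq\; \int_0^\infty \Big\|\sum_{m\geq 2}\phi_{m-1}(t)\,v_m\Big\|_X^q\,\frac{dt}{t} \;\leq\; C^q \sum_{n\geq 2}\|v_n\|_X^q.$$
I then choose $b_n = R^n$ (equivalently $a_n = 1-e^{-R^n}$) with $R$ a sufficiently large absolute constant; the substitution $s=\log t$ yields $dt/t = ds$ and $\phi_n(t) = \psi(s - s_n)$ with the Gumbel profile $\psi(s):=e^s e^{-e^s}$ and $s_n := -n\log R$. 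Writing $\Delta := \log R$, the problem becomes an $\ell^q(X)$--$L^q(\mathbb{R};X)$ equivalence for the equally spaced translates $\{\psi(\,\cdot\, - s_n)\}_n$.

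For the upper bound, the triangle inequality (valid since $\psi\geq 0$) and Hölder in the sum weighted by $\psi(s-s_m)$ give $\big\|\sum_m\psi(s-s_m)v_m\big\|_X^q \leq \Phi(s)^{q-1}\sum_m\psi(s-s_m)\|v_m\|_X^q$ with $\Phi(s):=\sum_m\psi(s-s_m)$. The function $\Phi$ is bounded by an absolute constant $M$, thanks to the left-hand exponential and right-hand doubly-exponential decay of $\psi$. Since $\int_{\mathbb{R}}\psi = 1$, Fubini yields $C \leq M^{(q-1)/q}$, uniform in $q$ and $X$.

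For the lower bound, introduce the $X$-valued averages $\theta_n(g) := \Delta^{-1}\int_{s_n - \Delta/2}^{s_n + \Delta/2} g(s)\,ds$. These intervals are disjoint, so Hölder gives $\big(\sum_n\|\theta_n g\|_X^q\big)^{1/q}\leq \Delta^{-1/q}\|g\|_{L^q(\mathbb{R};X)}$. Evaluating on $g = \sum_m \psi(\,\cdot\,-s_m)v_m$ and exploiting the closed antiderivative $\int\psi(s)\,ds = 1-e^{-e^s}$, one computes $\theta_n(g)=\sum_m \beta_{n-m} v_m$ with explicit scalars $\beta_k$ depending only on $k$. Thus $B = (\beta_{n-m})$ is a Toeplitz (convolution) operator on $\ell^q(X)$ with $\|\beta\|_{\ell^1} = 1/\Delta$, positive diagonal $\beta_0$, and $\beta_k$ decaying geometrically for $k>0$ and doubly-exponentially for $k<0$. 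For $R$ large enough (e.g., $R=16$, matching Theorem~\ref{Theorem A}) a short computation gives $\sum_{k\neq 0}|\beta_k| < \beta_0$; Young's inequality then bounds $\|B-\beta_0 I\|_{\ell^q(X)\to\ell^q(X)}\leq \sum_{k\neq 0}|\beta_k|$, so $B$ is invertible on $\ell^q(X)$ with $\|B^{-1}\|$ a universal constant. Combining $\|(v_m)\|_{\ell^q(X)}\leq \|B^{-1}\|\,\|(\theta_n g)\|_{\ell^q(X)}\leq \|B^{-1}\|\Delta^{-1/q}\|g\|_{L^q(\mathbb{R};X)}$ completes the lower bound. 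The main technical crux is the diagonal-dominance verification $\sum_{k\neq 0}|\beta_k| < \beta_0$, which is explicit thanks to the closed antiderivative of $\psi$; neither step uses any finer structure of $X$, so the resulting $c$ and $C$ are genuinely universal.
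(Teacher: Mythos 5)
Your argument is correct, and for the crucial lower bound it is genuinely different from the paper's. The paper also reduces matters to the deterministic inequality for the family $t\mapsto tb_ke^{-tb_k}$, and its upper bound is essentially yours (positivity, H\"older with weights $b_ke^{-tb_k}$, and $\sum_k b_ke^{-tb_k}\leq \frac{N}{(N-1)t}$ from lacunarity of $\{b_k\}$). But for the lower bound the paper does \emph{not} take a geometric sequence: it chooses $l_k,m_k$ through the incomplete Gamma tails $\int_0^{l_k}t^{q-1}e^{-t}\,\d t=\int_{m_k}^{\infty}t^{q-1}e^{-t}\,\d t=2^{-q^2(k+2)}\Gamma(q)$ and then defines $t_k,b_k$ so that on each time interval $(t_n,t_{n-1})$ the single term $k=n$ carries almost all of $\Gamma(q)/q^q$ while all cross terms $R_{n,k}$ are summably small; the conclusion then follows from the triangle inequality in $\ell_q$ together with a H\"older estimate on an auxiliary measure on $\mathbb{N}^2$. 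This makes the sequence $\{a_n\}$ depend on $q$ and grow extremely fast. You instead keep the fixed geometric choice $b_n=R^n$ ($b_0=0$), pass to the Gumbel translates $\psi(s-s_n)$, and obtain the lower bound by testing against disjoint interval averages and inverting the resulting Toeplitz matrix $B=(\beta_{n-m})$ by strict diagonal dominance; I checked the crux: with $\Delta=\log 16$ one has $\beta_0-\sum_{k\neq0}\beta_k=\Delta^{-1}\bigl(2(e^{-1/4}-e^{-4})-1\bigr)>0$, the Schur/Young bound is valid on $\ell^q(X)$ for any Banach space $X$ because the entries are nonnegative scalars, and the only $q$-dependence is through $\Delta^{-1/q}$ and $M^{(q-1)/q}$ with $M\leq 16/15$, so your $c,C$ are indeed universal. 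What your route buys: a single sequence independent of $q$ (essentially the sequence of Theorem \ref{Theorem A}), fully explicit constants, and a cleaner structural statement (a Riesz-sequence property of the translates $\psi(\cdot-s_n)$); note also that you only need the one-sided estimate $\|Bv\|\geq(\beta_0-\sum_{k\neq0}\beta_k)\|v\|$, not invertibility. What the paper's route buys: the diagonal dominance is achieved directly in the time variable, so no dual averaging step is needed, at the cost of heavier bookkeeping and a $q$-dependent sequence. Two small points to tidy up: state $a_0=0$, $b_0=0$ explicitly (your parenthetical $a_n=1-e^{-R^n}$ would give $a_0=1-e^{-1}$), and, as the paper itself does, justify the term-by-term differentiation and the identity $\theta_n(g)=\sum_m\beta_{n-m}v_m$ by first taking $f$ measurable with respect to some $\mathscr{A}_N$ and concluding by density.
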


We now turn to the noncommutative setting. We introduce the definitions of square functions for noncommutative martingales and noncommutative semigroups.

\

 Let $\mathcal{M}$ denote a finite von Neumann algebra equipped with a normal faithful normalized trace $\tau$, and $\left(\mathcal{M}_{n}\right)_{n \geq 1}$ an increasing filtration of von Neumann subalgebras of $\mathcal{M}$ whose union is $\mathrm{w}^{*}$-dense in $\mathcal{M}$. For $1 \leq p \leq \infty$ we denote by $L_{p}(\mathcal{M}, \tau)$, or simply $L_{p}(\mathcal{M})$ the usual noncommutative $L_{p}$-space associated with $(\mathcal{M}, \tau)$. As usual, $L_{p}\left(\mathcal{M}_{n}\right)=L_{p}\left(\mathcal{M}_{n},\left.\tau\right|_{\mathcal{M}_{n}}\right)$ is naturally identified as a subspace of $L_{p}(\mathcal{M})$. It is well-known that there exists a unique normal faithful conditional expectation $\mathcal{E}_{n}$ from $\mathcal{M}$ onto $\mathcal{M}_{n}$ such that $\tau \circ \mathcal{E}_{n}=\tau .$ Moreover, $\mathcal{E}_{n}$ extends to a contractive projection from $L_{p}(\mathcal{M})$ onto $L_{p}\left(\mathcal{M}_{n}\right)$, for every $1 \leq p<\infty$, which is still denoted by $\mathcal{E}_{n}$. Similarly, we denote by $d\mathcal{E}_n=\mathcal{E}_{n}-\mathcal{E}_{n-1}$ for $n\geq 1$ the martingale difference with $\mathcal{E}_0=0$ as convention.

A noncommutative martingale with respect to $\left(\mathcal{M}_{n}\right)_{n\geq 1}$ is a sequence $x=\left(x_{n}\right)_{n \geq 1}$ in $L_{1}(\mathcal{M})$ such that
$$
x_{n}=\mathcal{E}_{n}\left(x_{n+1}\right), \quad \forall n \geq 1.
$$
The difference sequence of $x$ is $d x=\left(d x_{n}\right)_{n \geq 1}$, where $d x_{n}=x_{n}-x_{n-1}$ (with $x_{0}=0$ by convention). Then we define $L_{p}$-martingales and bounded $L_{p}$-martingales, as usual.
In the sequel, we will fix $\left(\mathcal{M}, \tau\right)$ and $\left(\mathcal{M}_{n}\right)_{n \geq 1}$ as above and all noncommutative martingales will be with respect to $\left(\mathcal{M}_{n}\right)_{n \geq 1}$.

 Recall that $|\cdot|$ stands for the usual (right) modulus of operators, i.e. $|a|=$ $\left(a^{*} a\right)^\frac{1}{2}$. Define the column and row square functions respectively for $x\in L_1(\M)$
 $$   S_c(x)= \left(\sum_{n \geq 1}\left|d x_{n}\right|^{2}\right)^{\frac{1}{2}} \ \ \text{and} \ \ S_r(x)= \left(\sum_{n \geq 1}\left|d x_{n}^*\right|^{2}\right)^{\frac{1}{2}}. $$
We refer to \cite{JX}, \cite{PX1}, \cite{PX2} and \cite{PX3} for more information.

\

Let $\left(\sfT_{t}\right)_{t > 0}$ be a semigroup of operators on $\mathcal{M}$. We say that $\left(\sfT_{t}\right)_{t > 0}$ is a noncommutative symmetric diffusion semigroup if it satisfies the following conditions:\\
$(\mathsf{a})$ each $\sfT_{t}: \mathcal{M} \rightarrow \mathcal{M}$ is a unital, normal and completely positive; \\
$(\mathsf{b})$ for any $x \in \mathcal{M}, \sfT_{t}(x) \rightarrow x$ in the $w^{*}$-topology of $\mathcal{M}$ when $t \rightarrow 0^{+}$; \\
$(\mathsf{c})$ each $\sfT_t: \mathcal{M} \rightarrow \mathcal{M}$ is selfadjoint. Namely, for any $x, y\in\M$
 $$  \tau(\sfT_{t}(x)y)=\tau(x\sfT_{t}(y)). $$
$(\mathsf{d})$ the extension of each $\sfT_{t}: \mathcal{M} \rightarrow \mathcal{M}$ from $L_p(\M)$ to $L_p(\M)$ for $1\leq p<\8 $ is completely contractive.\\

 It is well-known that such a semigroup extends to a semigroup of contractions on $L_{p}(\mathcal{M})$ for any $1 \leq p<\infty$, and that $\left(\sfT_{t}\right)_{t > 0}$ is a selfadjoint semigroup on $L_{2}(\mathcal{M})$. Moreover, $\left(\sfT_{t}\right)_{t > 0}$ is strongly continuous on $L_{p}(\mathcal{M})$ for any $1 \leq p<\infty$.

Define
$$  G^\sfT_c(x)=\left( \int_{0}^{\8} t\left|\dfrac{\partial}{\partial t}\sfT_t(x)\right|^2\mathrm{d}t\right) ^\frac{1}{2}  $$
and
$$  G^\sfT_r(x)=\left( \int_{0}^{\8} t\left|\dfrac{\partial}{\partial t}\sfT_t(x)^*\right|^2\mathrm{d}t\right) ^\frac{1}{2}. $$
We call $G^\sfT_c$ and $G^\sfT_r$ the column and row square functions, respectively. We refer the reader to \cite{JLX} for more information.
\

Given a strictly increasing sequence $ \{ a_n \}_{n\geqslant 0} $ with $ a_0 = 0 $ and $ \lim\limits_{n \to \infty} a_n = 1  $ as before, analogously we define the mapping $T^t \ (t>0)$ as follows
\begin{equation}\label{eee3}
 T^t=\sum\limits_{n= 1}^{\infty}(1-a_{n-1})^td \mathcal{E}_n=\sum\limits_{n= 1}^{\infty}\left[(1-a_{n-1})^t-(1-a_n)^t\right]\mathcal{E}_n. 
 \end{equation}
 Note that $\mathrm{F}=\mathcal{E}_1$ as well. 

\begin{thm}\label{nthm}
If $ a_{n}=1-\mathrm{e}^{-16^{n+1}}$ for $n\geq 1$, then $\{T^t\}_{t>0}$ defined in (\ref{eee3}) is a noncommutative symmetric diffusion semigroup. Moreover, it satisfies for $x\in L_1(\M)$
\begin{equation}\label{e1.7.1}
 \sqrt{\dfrac{7}{60}} \ S_c(x-\mathrm{F}(x)) \leq G^T_c(x)\leq \sqrt{\dfrac{23}{60}} \ S_c(x-\mathrm{F}(x))
 \end{equation}
and similarly
\begin{equation}\label{e1,7.2}
\sqrt{\dfrac{7}{60}} \ S_r(x-\mathrm{F}(x)) \leq G^T_r(x)\leq \sqrt{\dfrac{23}{60}} \ S_r(x-\mathrm{F}(x)).
 \end{equation}
\end{thm}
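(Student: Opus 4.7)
The strategy is to reduce both pointwise inequalities to a single operator inequality on a scalar matrix, by computing $G^T_c(x)^2$ explicitly as a bilinear form in the martingale differences $d\mathcal{E}_n(x)$ and invoking at the end the operator monotonicity of the square root. To verify the semigroup axioms, I observe that the projections $\{d\mathcal{E}_k\}_{k\ge 1}$ are mutually orthogonal and sum to $I$ on $L_2(\mathcal{M})$, so $T=\sum_k(1-a_{k-1})\,d\mathcal{E}_k$ is the spectral decomposition of a positive selfadjoint contraction on $L_2(\mathcal{M})$ whose $t$-th functional-calculus power is exactly (\ref{eee3}). Rewriting the latter as $T^t=\sum_n\lambda_n(t)\mathcal{E}_n$ with $\lambda_n(t)=(1-a_{n-1})^t-(1-a_n)^t\ge 0$ and $\sum_n\lambda_n(t)=1$ exhibits $T^t$ as a convex combination of the normal, unital, trace-preserving, completely positive conditional expectations $\mathcal{E}_n$; axioms $(\mathsf{a})$--$(\mathsf{d})$ follow at once, with the semigroup law and weak-$*$ continuity at $0^+$ coming directly from the functional calculus.

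With $a_n=1-e^{-16^{n+1}}$, one has $(1-a_{n-1})^t=1$ for $n=1$ and $e^{-t\,16^n}$ for $n\ge 2$, so $T^t(x)-\mathrm{F}(x)=\sum_{n\ge 2}e^{-t\,16^n}\,d\mathcal{E}_n(x)$. Differentiating, squaring and evaluating the Laplace integral $\int_0^\infty t\,e^{-t(16^n+16^m)}\,dt=(16^n+16^m)^{-2}$ give the identity
\[
G^T_c(x)^2=\sum_{n,m\ge 2}A_{n,m}\,(d\mathcal{E}_n(x))^*\,d\mathcal{E}_m(x),\qquad A_{n,m}=\frac{16^{n+m}}{(16^n+16^m)^2}=\frac{16^{-|n-m|}}{(1+16^{-|n-m|})^2}.
\]
The diagonal of $A$ equals $1/4$, while $S_c(x-\mathrm{F}(x))^2=\sum_{n\ge 2}|d\mathcal{E}_n(x)|^2$ corresponds to the identity matrix; thus, once the scalar operator inequality $\tfrac{7}{60}\,I\le A\le\tfrac{23}{60}\,I$ on $\ell_2(\{n\ge 2\})$ is established, factoring $\tfrac{23}{60}I-A=Q^*Q$ in $B(\ell_2)$ and setting $\xi_n=d\mathcal{E}_n(x)$ gives
\[
\tfrac{23}{60}\,S_c(x-\mathrm{F}(x))^2-G^T_c(x)^2=\sum_k\Big|\sum_n Q_{k,n}\xi_n\Big|^2\ge 0,
\]
and the lower bound is handled by the symmetric factorisation. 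Operator monotonicity of $t\mapsto\sqrt{t}$ on positives then yields (\ref{e1.7.1}), first for $x\in L_2(\mathcal{M})$ and then for $x\in L_1(\mathcal{M})$ by density; the row inequality (\ref{e1,7.2}) follows by applying (\ref{e1.7.1}) to $x^*$ together with the identities $d\mathcal{E}_n(x^*)=(d\mathcal{E}_n(x))^*$, $G^T_r(x)=G^T_c(x^*)$ and $S_r(x)=S_c(x^*)$.

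To prove the scalar matrix inequality, I split $A=\tfrac14 I+E$ with $E$ off-diagonal, real symmetric, and apply the Schur test:
\[
\|E\|_{\ell_2\to\ell_2}\le\max_n\sum_{m\neq n}|E_{n,m}|=2\sum_{k=1}^\infty\frac{16^{-k}}{(1+16^{-k})^2}\le\frac{32}{289}+2\sum_{k\ge 2}16^{-k}=\frac{32}{289}+\frac{1}{120}<\frac{2}{15}.
\]
Selfadjointness of $E$ then gives $-\tfrac{2}{15}I\le E\le\tfrac{2}{15}I$, hence $A\in[\tfrac{7}{60},\tfrac{23}{60}]\,I$. The main technical obstacle is exactly this tight numerical match: the super-exponential growth $a_n=1-e^{-16^{n+1}}$ is calibrated precisely so that the off-diagonal operator norm of $A$ stays below $2/15=8/60$, which is the slack between the diagonal value $1/4=15/60$ and each of the target constants $7/60$ and $23/60$.
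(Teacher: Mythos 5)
Your proposal is correct and follows essentially the same route as the paper: it computes $G^T_c(x)^2$ as the bilinear form $\sum_{n,m}A_{n,m}\,d\mathcal{E}_n(x)^*d\mathcal{E}_m(x)$ with the same matrix $A_{n,m}=\frac{16^{n+m}}{(16^n+16^m)^2}$, bounds its spectrum in $[\tfrac{7}{60},\tfrac{23}{60}]$ by controlling the off-diagonal part (your Schur-test row-sum bound is the same estimate the paper gets via Gershgorin), transfers the scalar inequality to operator coefficients, and finishes with operator monotonicity of the square root, handling the row case via $x\mapsto x^*$. The only cosmetic differences are that you transfer the matrix inequality through a factorization $Q^*Q$ instead of pairing with vectors in a common dense domain, and you package the semigroup axioms via the convex-combination representation $T^t=\sum_n\lambda_n(t)\mathcal{E}_n$; both are equivalent to the paper's verification.
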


This article is organized as follows. The next section is devoted to the proofs of Theorem \ref{thm1.1} and Theorem \ref{Theorem A}. Theorem \ref{thm2.1} and Theorem \ref{nthm} are proved respectively in the third section and last section.
	\bigskip
	
	\section{Proofs of Theorem \ref{thm1.1} and Theorem \ref{Theorem A}}
We will need the following elementary lemmas.
	
	\begin{lemma}\label{lem2.1}
		If $S$ is a positive injective continuous operator on a Hilbert space $\mathcal{H}$, then for $x\in \mathcal{H}$
		$$ \lim\limits_{t\rightarrow0^+}\|S^t(x)-x\|=0. $$
		\end{lemma}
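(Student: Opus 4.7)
The plan is to reduce the claim to a dominated-convergence argument via the continuous (Borel) functional calculus. Since $S$ is a bounded positive selfadjoint operator on $\mathcal{H}$, the spectral theorem furnishes a projection-valued spectral measure $E$ on $[0,\|S\|]$ with $S=\int \lambda\,dE(\lambda)$, and for $t>0$ the operator $S^t$ obtained by continuous functional calculus is given by $S^t=\int \lambda^t\,dE(\lambda)$. For any $x\in\mathcal{H}$, the scalar measure $\mu_x(A):=\|E(A)x\|^2$ is a finite positive Borel measure on $[0,\|S\|]$ with total mass $\|x\|^2$.

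The crucial use of the hypothesis that $S$ is \emph{injective} is to rule out mass at the origin: indeed, $E(\{0\})$ is the orthogonal projection onto $\ker S$, which is trivial, so $\mu_x(\{0\})=0$. Consequently $\mu_x$ is supported in $(0,\|S\|]$, where the function $\lambda\mapsto \lambda^t$ converges pointwise to $1$ as $t\to 0^+$.

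Now I would write
\begin{equation*}
\|S^t(x)-x\|^2=\int_{0}^{\|S\|}|\lambda^t-1|^2\,d\mu_x(\lambda)=\int_{(0,\|S\|]}|\lambda^t-1|^2\,d\mu_x(\lambda),
\end{equation*}
and observe that for $0<t\leq 1$ and $\lambda\in(0,\|S\|]$ one has the uniform bound $|\lambda^t-1|\leq \max\{1,\|S\|\}+1$, since $\lambda^t\leq 1$ when $\lambda\leq 1$ and $\lambda^t\leq \lambda\leq\|S\|$ when $\lambda\geq 1$. The constant dominator is $\mu_x$-integrable because $\mu_x$ is a finite measure, so the Lebesgue dominated convergence theorem yields that the integral tends to $0$ as $t\to 0^+$, proving the claim.

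I do not anticipate a real obstacle; the only point that requires genuine care is making sure the $\lambda=0$ atom contributes nothing, which is precisely where injectivity is invoked. Everything else is a textbook application of the spectral theorem and dominated convergence.
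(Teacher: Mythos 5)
Your proof is correct and follows essentially the same route as the paper's: spectral decomposition of $S$, the observation that injectivity kills the atom at $\lambda=0$, and then dominated convergence against the finite scalar spectral measure. The only difference is that you spell out the uniform dominator $|\lambda^t-1|\le\max\{1,\|S\|\}+1$ explicitly, which the paper leaves implicit.
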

	\begin{proof}
		By the spectral decomposition, there exists a resolution of the identity $\left\lbrace E_\lambda \right\rbrace_{0\leq \lambda\leq \|S\|}$ such that
		$$ S=\int_{0}^{\|S\|}\lambda \mathrm{d}E_\lambda. $$
		Since $S$ is injective, $E_{0}=0$. Hence for $x\in \mathcal{H}$
		$$ x=\int_{(0, \|S\|]} \mathrm{d}E_\lambda(x). $$
		Thus
		\begin{equation*}
		\begin{aligned} 
		 \|S^t(x)-x\|^2&=\left\|\int_{(0, \|S\|]}(\lambda^t-1) \mathrm{d}E_\lambda(x)\right\|^2 \\
		 &=\int_{(0, \|S\|]}|\lambda^t-1|^2\mathrm{d}\langle E_\lambda(x), x\rangle.
		 	 \end{aligned}
		 \end{equation*}
		By the Lebesgue dominated convergence theorem and the fact that $|\lambda^t-1|\rightarrow 0$ as $t\rightarrow0^+$ for $\lambda\neq 0$, we deduce the desired limit.
		\end{proof}
	
	\begin{lem}[Gershgorin circle theorem]\label{lem2.2}
		Let $A=(a_{i, j})_{1\leq i, j\leq n}$ be a complex $n \times n$ matrix. If $\lambda$ is an eigenvalue of $A$, then
		$$ \lambda\in \bigcup_{k=1}^n D(a_{kk}, \sum\limits_{j=1, j\neq k}^n |a_{kj}|), $$
		where $D(a_{kk}, \sum\limits_{j=1, j\neq k}^n |a_{kj}|)$ is the closed disc of the complex plane centered at $a_{kk}$ with radius $\sum\limits_{j=1, j\neq k}^n |a_{kj}|$.
	\end{lem}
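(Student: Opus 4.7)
The plan is to invoke the standard extremal-coordinate argument. Given an eigenvalue $\lambda$ of $A$, I would fix a nonzero eigenvector $x=(x_1,\ldots,x_n)^T$ satisfying $Ax=\lambda x$, and choose an index $k\in\{1,\ldots,n\}$ realizing the maximum $|x_k|=\max_{1\le i\le n}|x_i|$. Since $x\neq 0$, this guarantees $|x_k|>0$, which will be crucial for the final division.

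Next, I would read off the $k$-th coordinate of the vector identity $Ax=\lambda x$, namely $\sum_{j=1}^{n} a_{kj}x_j=\lambda x_k$, and rearrange it as
$$(\lambda-a_{kk})\,x_k=\sum_{j\neq k}a_{kj}x_j.$$
Taking moduli, applying the triangle inequality, and using $|x_j|\le |x_k|$ for every $j$, I would obtain $|\lambda-a_{kk}|\,|x_k|\le |x_k|\sum_{j\neq k}|a_{kj}|$. Dividing by the strictly positive quantity $|x_k|$ yields $|\lambda-a_{kk}|\le \sum_{j\neq k}|a_{kj}|$, which is exactly the statement that $\lambda\in D\bigl(a_{kk},\sum_{j\neq k}|a_{kj}|\bigr)$, and a fortiori in the union of the $n$ Gershgorin discs.

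The argument is entirely elementary and contains no real obstacle; the only subtlety worth flagging is that the maximizing index $k$ depends on the chosen eigenvector (and not intrinsically on $\lambda$), but since the conclusion only asserts membership in the union of all $n$ discs, any valid choice of $k$ suffices and no uniqueness is required. This lemma is recorded here because it will presumably be applied later to locate eigenvalues of finite truncations of the matrices arising from the semigroup $T^t$ in the upcoming proof of Theorem~\ref{Theorem A}.
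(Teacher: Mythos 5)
Your proof is correct and follows essentially the same extremal-coordinate argument as the paper: pick the maximizing entry of an eigenvector, read off that row of $Ax=\lambda x$, and apply the triangle inequality. Nothing to add.
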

	\begin{proof}
		Assume that $\lambda$ is an eigenvalue of $A$. Then there exists a nonzero vector $x=(x_1, \cdots, x_n)$ such that $Ax=\lambda x$. Choose $i$ such that $|x_i|=\text{max}_{1\leq j\leq n}|x_j|>0$. Since
		$$ \sum_{j=1}^{n}a_{ij}x_j=\lambda x_i, $$
		we obtain by the triangle inequality
		$$ |\lambda-a_{ii}|=\left|\sum\limits_{j=1, j\neq i}^n a_{ij}\dfrac{x_j}{x_i}\right| \leq \sum\limits_{j=1, j\neq i}^n |a_{ij}|, $$
		which implies the desired result.
	\end{proof}
	
\begin{proof}[Proof of Theorem \ref{thm1.1}]
    It is easy to prove the first part of Theorem \ref{thm1.1}. Indeed, we can rewrite $T$ as
	$$ T=\sum\limits_{n= 1}^{\infty}(1-a_{n-1})dE_n, $$
	since
	\begin{equation*}
	\begin{aligned} 
	T&=\lim\limits_{n\rightarrow\infty}\sum\limits_{k=1}^n(a_k-a_{k-1}) E_k=	\lim\limits_{n\rightarrow\infty}\sum\limits_{k=1}^n(a_k-a_{k-1}) \sum\limits_{i=1}^{k}d E_i\\
	&=\lim\limits_{n\rightarrow\infty}\sum\limits_{i=1}^nd E_i\sum\limits_{k=i}^{n}(a_k-a_{k-1})=\lim\limits_{n\rightarrow\infty}\sum\limits_{i=1}^n(a_n-a_{i-1})d E_i\\
	&=\lim\limits_{n\rightarrow\infty}\left[\sum\limits_{i=1}^n(1-a_{i-1})d E_i-(1-a_n)E_n\right]\\
	&=\sum\limits_{n= 1}^{\infty}(1-a_{n-1})d E_n.
	\end{aligned}
	\end{equation*}
	Then by the continuous functional calculus and $d E_p d E_q=0$ for $p\neq q$, we obtain
	\begin{equation}\label{ee1}
	T^t=\sum\limits_{n= 1}^{\infty}(1-a_{n-1})^td E_n , \quad t>0. 
	\end{equation}
	Hence we conclude that $T^t$ is a positive contraction for any $t>0$ on $L_p(\Omega, \mathscr{A}, \mu)$ ($1\leq p\leq \8$). Actually, for $t>0$
		\begin{equation}\label{ee2}
	 T^t=\sum\limits_{n= 1}^{\infty}\left[(1-a_{n-1})^t-(1-a_n)^t\right]E_n. 
		\end{equation}
	
	It suffices to show that $\{T^t\}_{t>0}$ is a symmetric diffusion semigroup. $T^t$ satisfies the conditions $(a)$ and $(d)$ since $T^t$ is a positive contraction on $L_p(\Omega, \mathscr{A}, \mu)$ ($1\leq p\leq \8$). The condition $(b)$ is obvious as $T^t$ is generated by $T$ via continuous functional calculus. It remains to prove that $T^t$ satisfies the conditions $(c), (e), (f)$. 
	
	For the condition $(e)$, $T^t$ is selfadjoint on  $L_2(\Omega, \mathscr{A}, \mu)$ since all conditional expectations are selfadjoint. As for the condition $(f)$, 
	$$ T^t(1)=\sum\limits_{n= 1}^{\infty}(1-a_{n-1})^td E_n(1)=(1-a_0)^tdE_1(1)=1.  $$ 
	Note that the conditional expectations are projections on $L_2(\Omega, \mathscr{A}, \mu)$.
	
	In order to show that the semigroup $\lbrace T^t\rbrace_{t>0}$ satisfies the condition $(c)$, it suffices to prove that $T$ is injective on $L_2(\Omega, \mathscr{A}, \mu)$ by Lemma \ref{lem2.1}. This is because for $f\in L_2(\Omega, \mathscr{A}, \mu)$
	\begin{equation}\label{equa2.2}
	\|T(f)\|_2^2=\sum_{n=1}^{\infty}(1-a_{n-1})^2\|d E_n(f)\|_2^2 
	\end{equation}
	and
		\begin{equation}\label{equa2.3} \|f\|_2^2=\lim\limits_{n\rightarrow\infty}\|E_n(f)\|_2^2=\sum_{n=1}^{\infty}\|d E_n(f)\|_2^2. 
			\end{equation}
	\end{proof}

\begin{rem}
	From \eqref{equa2.2} and \eqref{equa2.3}, we can also obtain
	$$ \mathrm{F}(f)=E_1(f), \quad f\in L_p(\Omega, \mathscr{A}, \mu)\ (1\leq p\leq \infty). $$
	Note that
	$$ S(f-\mathrm{F}(f))=\left( \sum_{n=2}^{\infty}|d E_n(f)|^2\right)^\frac{1}{2} .  $$
	\end{rem}

	\begin{proof}[Proof of Theorem \ref{Theorem A}]
	We will calculate the derivative $\frac{\partial}{\partial t} T^t(f)$. To simplify our presentation, we assume that $f\in L_p(\Omega, \mathscr{A}_n, \mu)$ for a fixed $n\geq 2$ since $f$ can be approximated by a sequence of $f_n$ on $L_p(\Omega, \mathscr{A}_n, \mu) (1\leq p<\infty, n\in \mathbb{N}^*)$.
	
	Since $f\in L_p(\Omega, \mathscr{A}_n, \mu)$, $ d E_m(f)=0$ if $m>n$. Then for $t>0$
	$$ T^t(f)=\sum_{k=1}^{n}(1-a_{k-1})^td E_k(f), $$
	which implies
	$$ \frac{\partial}{\partial t} T^t(f)= \sum_{k=1}^{n}(1-a_{k-1})^t \ln(1-a_{k-1}) d E_k(f)=\sum_{k=2}^{n}(1-a_{k-1})^t \ln(1-a_{k-1}) d E_k(f). $$
	Hence
	
	\begin{align}\label{1.2}  
\int_{0}^{\infty}\left|t \frac{\partial}{\partial t} T^{t}(f)\right|^{2} \frac{\mathrm{d} t}{t}& =\int_{0}^{\infty}t\left| \frac{\partial}{\partial t} T^{t}(f)\right|^{2} \mathrm{d}t \notag \\
& =\int_{0}^{\infty} t| \sum_{k=2}^{n}(1-a_{k-1})^t \ln(1-a_{k-1}) d E_k(f) |^{2} \mathrm{d}t \notag \\
& =\sum_{i,j=2}^{n}\int_{0}^{\infty}t(1-a_{i-1})^t (1-a_{j-1})^t \ln(1-a_{i-1})\ln(1-a_{j-1}) d E_i(f) d E_j(f) \mathrm{d}t\notag \\
& =\sum_{i,j=2}^{n}\ln(1-a_{i-1})\ln(1-a_{j-1}) d E_i(f) d E_j(f)\int_{0}^{\infty}t(1-a_{i-1})^t (1-a_{j-1})^t  \mathrm{d}t \notag \\
& =\sum_{i,j=2}^{n}\dfrac{\ln(1-a_{i-1})\ln(1-a_{j-1})}{\ln^2\left[(1-a_{i-1})(1-a_{j-1})\right]} d E_i(f) d E_j(f).
	 \end{align}
We write $y_i=-\ln(1-a_{i-1}) \ (i\geq 2)$. Then $\lbrace y_i\rbrace_{i\geq 2}$ is a strictly increasing positive sequence which tends to infinity as $i\rightarrow\infty$. Let  $B=(b_{ij})_{1\leq i, j\leq n-1}$ be the real symmetric matrix given by
$$ b_{ij}=\dfrac{y_{i+1} y_{j+1}}{(y_{i+1}+y_{j+1})^2} .$$
We need to determine the lower and upper bounds of the eigenvalues of $B$. 

Let  $\lambda$ be an eigenvalue of $B$. By Lemma \ref{lem2.2} we have 
$$  \lambda\in \bigcup_{k=1}^{n-1} D(b_{kk}, \sum\limits_{j=1, j\neq k}^{n-1} |b_{kj}|).$$
However $b_{kk}=\frac{1}{4}$ and $y_i=16^i$ for $i\geq 2$,  thus for $k\neq j$
$$ b_{kj}=\dfrac{y_{k+1}y_{j+1}}{(y_{k+1}+y_{j+1})^2}=\left(\frac{y_{k+1}}{y_{j+1}}+\frac{y_{j+1}}{y_{k+1}}+2\right)^{-1}\leq 16^{-|k-j|}. $$
Thus for every $1\leq k\leq n-1$,
$$ \sum\limits_{j=1, j\neq k}^{n-1} |b_{kj}|\leq \sum\limits_{j=1, j\neq k}^{n-1} 16^{-|k-j|}\leq 2\sum\limits_{i=1}^{\infty}16^{-i}=\dfrac{2}{15}.  $$
We then get
$$ \dfrac{7}{60}\leq \lambda \leq \dfrac{23}{60}. $$
Thus $B$ satisfies ($I$ denoting the identity matrix)
$$ \dfrac{7}{60}I\leq B \leq \dfrac{23}{60}I.  $$
Therefore by (\ref{1.2}), 
$$ \dfrac{7}{60}\sum_{k=2}^{n}|d E_n(f)|^2\leq \int_{0}^{\infty}\left|t \frac{\partial}{\partial t} T^{t}(f)\right|^{2} \frac{\mathrm{d} t}{t}\leq  \dfrac{23}{60}\sum_{k=2}^{n}|d E_n(f)|^2 $$
which implies 
\begin{equation}\label{1.3}
 \sqrt{\dfrac{7}{60}}\ S(f-\mathrm{F}(f))\leq G^T(f)\leq \sqrt{\dfrac{23}{60}}\ S(f-\mathrm{F}(f)). 
 \end{equation}
\end{proof}
 
 \begin{rem}
 	By virtue of the special semigroups induced by martingales in Theorem \ref{thm1.1} and with the help of Theorem \ref{Theorem A}, we get a lower bound of optimal orders of best constants of Littlewood-Paley-Stein inequality for symmetric diffusion semigroups via inequalities of martingale square functions. However, to show that this lower bound given by martingale inequalities is exactly the optimal order of best constants for Littlewood-Paley-Stein inequality  is much more delicate. For more details, see \cite{Xu1}.
 	\end{rem}
 
\bigskip

\section{Proof of Theorem \ref{thm2.1}}
The proof of Theorem \ref{thm2.1} is more complicated than the one of Theorem \ref{Theorem A}. In contrast to Theorem \ref{Theorem A}, the choice of the sequence $\{a_n\}_{n\geq 0}$ is not explicit for Theorem \ref{thm2.1}.

To make our proof easier and neater, in the sequel we denote $ b_n = - \ln (1 - a_n) $ for convenience. Then $ b_0 = 0 $ and $ \lim\limits_{n \to \infty} b_n = +\infty $. Let $1\leq p, q<\8$.  We can also write $G_q^T(f)$ in the following form
\begin{equation}\label{Gp}
G_q^T(f) =\sk{ \int_{0}^{\infty} \norm{\sum_{k = 1}^{\infty} t b_k e^{-t b_k} d E_{k+1}(f) }_X^q \frac{\d t}{t} }^{\frac{1}{q}}, \quad
\forall f \in L_p(\Omega ; X).
\end{equation}
Hence to find a suitable sequence $\{a_n\}_{n\geq 1}$ that satisfies Theorem \ref{thm2.1}, we should search for the existence of the sequence $\{b_k\}_{k\geq 0}$.

Fixing $ M $ sufficiently large, we denote
$$
G_{q, M}^T(f) =\sk{ \int_{0}^{M} \norm{\sum_{k = 1}^{\infty} t b_k e^{-t b_k} d E_{k+1}(f) }_X^q \frac{\d t}{t} }^{\frac{1}{q}}.
$$
Now we are about to prove
\begin{equation}\label{i2.2}
G_q^T(f) \approx_{p, q} S_q\sk{ f-{\rm F}(f) }, \quad \forall f \in L_p(\Omega ; X).
\end{equation}
Indeed, we have a stronger result that the constants in (\ref{i2.2}) are universal.

\begin{proof}[Proof of Theorem \ref{thm2.1}]
	Take a strictly decreasing sequence $ \{ t_n \}_{n\geqslant 0} $ with $\lim\limits_{n\rightarrow\infty}t_n=0$ and $ t_0 = M $,  then
	$$
	G_{q, M}^T(f) =\sk{\sum_{n=1}^{\infty} \int_{t_n}^{t_{n-1}} \norm{\sum_{k = 1}^{\infty} t b_k e^{-t b_k} d E_{k+1}(f) }_X^q \frac{\d t}{t} }^{\frac{1}{q}}.
	$$
	For $n\geq 1$ let
	$$ R_n =\sk{ \int_{t_n}^{t_{n-1}} \norm{\sum_{k = 1}^{\infty} t b_k e^{-t b_k} d E_{k+1}(f) }_X^q \frac{\d t}{t} }^{\frac{1}{q}},$$
and for $ k\geq 1$
	$$ R_{n, k} =\sk{ \int_{t_n}^{t_{n-1}} \norm{ t b_k e^{-t b_k} d E_{k+1}(f) }_X^q \frac{\d t}{t} }^{\frac{1}{q}}. $$
	Considering the interval $ \sk{ t_{n},t_{n-1} } $ equipped with the measure $ \frac{\d t}{t} $, by the triangle inequality, we get
	\begin{align*}
	R_n & = \norm{ \sum_{k = 1}^{\infty} t b_k e^{-t b_k} d E_{k+1}(f)}_{L_q(\sk{ t_{n},t_{n-1} }, \frac{\d t}{t}; X)}\notag \\
	&\geqslant \left|\norm{t b_n e^{-t b_n} d E_{n+1}(f) }_{L_q(\sk{ t_{n},t_{n-1} }, \frac{\d t}{t}; X)} -\sk{ \sum_{k \neq n, k\geq 1} \norm{  t b_n e^{-t b_n} d E_{n+1}(f)}_{L_q(\sk{ t_{n},t_{n-1} }, \frac{\d t}{t}; X)} }\notag \right|\\
	& = \left|R_{n, n} - \sk{ \sum_{k \neq n, k\geq 1} R_{n, k} }\right|.
	\end{align*}
	Thus we deduce
	\begin{align*}
	G_{q, M}^T(f)& = \sk{ \sum_{n=1}^{\infty} R_n^q }^{\frac{1}{q}}  \geqslant \sk{ \sum_{n=1}^{\infty}\jdz{ R_{n, n} - \sk{ \sum_{k \neq n, k\geq 1} R_{n, k} } }^q }^{\frac{1}{q}}.
	\end{align*}
To get the desired control,  we are concerned with making $ \sum\limits_{k \neq n, k\geq 1} R_{n, k}  $ sufficiently small.
	
   We take $ l_k $ and $ m_k $ $(k \geqslant 1)$ such that
	\begin{align*}
	\int_{0}^{l_k} t^{q-1}e^{-t} \d t & = \dfrac{1}{2^{q^2(k+2)}}\Gamma (q),  \\
	\int_{m_k}^{\infty} t^{q-1}e^{-t} \d t & = \dfrac{1}{2^{q^2(k+2)}}\Gamma (q),
	\end{align*}
	where  $ \Gamma(q) = \int_{0}^{\infty} t^{q-1}e^{-t} \d t,\   \text{as } \ q\geq 1 $.
	Denote $ l_0 = m_0 =1 $. It is easy to verify that $ l_k<m_k$ $(k\geq 1)$, $ \lim\limits_{k \to \infty} l_k = 0 $ and $ \lim\limits_{k\to \infty} m_k = +\infty $.
	
	 Then $\{t_k\}_{k\geq 0} $ and $ \{b_k\}_{k\geq 0} $ are just defined by the following formulas
	\begin{align*}
	& t_0 = M,\quad b_0 = 0,\\
	& t_kb_kq = l_k, \quad
	t_{k-1}b_kq = m_k,\qquad k\geqslant 1.
	\end{align*}
	By elementary calculations, it follows that
	\begin{align*}
	t_k & = M \prod_{j=0}^{k} \dfrac{l_j}{m_j}\ \ \ \ \ \ {\rm for}\ k\geqslant 0, \\
	b_k & = \dfrac{1}{Mq} \prod_{j=1}^{k} \dfrac{m_j}{l_{j-1}}\ \ \  {\rm for}\ k\geqslant 1\ {\rm and}\ b_0 =0.
	\end{align*}
	It is obvious that $\{t_k\}_{k\geq 0}$ is decreasing, $\{b_k\}_{k\geq 0}$ is increasing, and
	$$ \lim_{k\rightarrow\infty}t_k=0, \ \ \lim_{k\rightarrow\infty}b_k=\infty. $$
	Then
	\begin{align*}
	|R_{n, n}|^q & = \int_{t_n}^{t_{n-1}} \norm{ t b_n e^{-t b_n} d E_{n+1}(f) }_X^q \frac{\d t}{t}  \\
	& = \int_{t_nb_nq}^{t_{n-1}b_nq} t^{q-1} e^{-t} \d t \frac{\norm{  d E_{n+1}(f) }_X^q}{q^q} \\
	& = \int_{l_n}^{m_n} t^{q-1} e^{-t} \d t  \frac{\norm{  d E_{n+1}(f) }_X^q}{q^q} \\
	& = \sk{ 1 - \frac{2}{2^{q^2(n+2)}} } \frac{\Gamma(q)}{q^q}\norm{  d E_{n+1}(f) }_X^q.
	\end{align*}
	On the other hand, for $1\leq k < n $,
	\begin{align*}
	|R_{n, k}|^q & = \int_{t_nb_kq}^{t_{n-1}b_kq} t^{q-1} e^{-t} \d t \frac{\norm{  d E_{k+1}(f) }_X^q}{q^q} \\
	& \leqslant \int_{0}^{t_{n-1}b_{n-1}q} t^{q-1} e^{-t} \d t \frac{\norm{  d E_{k+1}(f) }_X^q}{q^q} \\
	& = \int_{0}^{l_{n-1}} t^{q-1} e^{-t} \d t \frac{\norm{  d E_{k+1}(f) }_X^q}{q^q}\\
	& = \frac{\Gamma(q)}{2^{q^2(n+1)}q^q}\norm{  d E_{k+1}(f) }_X^q.
	\end{align*}
	Similarly, for $ k > n $
	\begin{align*}
	|R_{n, k}|^q & \leqslant \int_{m_k}^{\infty} t^{q-1} e^{-t} \d t \frac{\norm{  d E_{k+1}(f) }_X^q}{q^q} \\
	& = \frac{\Gamma(q)}{2^{q^2(k+2)}q^q}\norm{  d E_{k+1}(f) }_X^q.
	\end{align*}
	 Let $ r_n,r_{n, k} > 0 $ be defined as follows
	\begin{align*}
	|r_n|^q & = \sk{ 1 - \frac{2}{2^{q^2(n+2)}} } \frac{\Gamma(q)}{q^q}\norm{  d E_{n+1}(f) }_X^q, \\
	|r_{n, k}|^q & = \frac{\Gamma(q)}{2^{q^2(n+1)}q^q}\norm{  d E_{k+1}(f) }_X^q \ {\rm when}\ n>k\geq 1, \\
	|r_{n, k}|^q & = \frac{\Gamma(q)}{2^{q^2(k+2)}q^q}\norm{  d E_{k+1}(f) }_X^q \ {\rm when}\ n<k, \\
	r_{k, k}& = 0.
	\end{align*}
	Denote $ r:= \{ r_n \}_{n\geqslant 1},\ r^{(k)}:= \{ r_{n, k} \}_{n\geqslant 1} $. We estimate the $\ell_q-$norms of $r$ and $r^{(k)}$. By the definition of $r$, we have
	$$ \|r\|_{\ell_q}=\sk{ \sum_{n=1}^{\infty} \sk{ 1 - \frac{2}{2^{q^2(n+2)}} } \frac{\Gamma(q)}{q^q}\norm{  d E_{n+1}(f) }_X^q  }^{\frac{1}{q}}\geq   \left(\sk{ 1 - \frac{2}{2^{3q^2}} } \frac{\Gamma(q)}{q^q}\right)^\frac{1}{q}S_q(f-\mathrm{F}(f)).$$
	It remains to estimate $\left\|\sum\limits_{k\geq 1}^\infty r^{(k)}\right\|_{\ell_q}$. We consider the measure space $(\mathbb{N}^2, 2^{\mathbb{N}^2}, \mu)$ where
	\begin{equation*}
\mu\left( \{(n, k)\} \right)= \left\{\begin{array}{ccc}
\left(\frac{\Gamma(q)}{2^{q^2(n+1)}q^q}\right)^\frac{1}{q} &, \  \text{if} &1\leq k<n\\
0 &, \ \text{if} &k=n\\
\left(\frac{\Gamma(q)}{2^{q^2(k+2)}q^q}\right)^\frac{1}{q} &, \ \text{if} &k>n
\end{array}\right. .
\end{equation*}
It is very easy to verify that $\mu$ is a finite measure. Assume that $\nu$ is the counting measure on $\mathbb{N}$. Define
	\begin{equation*}
	\begin{aligned}
	f : \mathbb{N}^2&\longrightarrow \mathbb{R}_+\\
	(n, k)&\longmapsto \norm{  d E_{k+1}(f) }_X.
	\end{aligned}
	\end{equation*}
Then we deduce by the H\"{o}lder inequality
\begin{align*}
\left\|\sum\limits_{k\geq 1}^\infty r^{(k)}\right\|_{\ell_q}&=\left(\sum_{n=1}^{\infty}\left|\sum_{k=1}^{\infty}r_{n, k}\right|^q\right)^\frac{1}{q}\\
&=\left(\int_{n\in\mathbb{N}}\left| \int_{k\in\mathbb{N}}f(n, k)\d \mu\right|^q\d \nu\right)^\frac{1}{q}\\
&\leq \left( \int_{n\in\mathbb{N}} \left|\int_{k\in\mathbb{N}}f(n, k)^q\d \mu\right| \left( \int_{k\in\mathbb{N}} 1 \d\mu \right)^{q-1} \d \nu\right) ^\frac{1}{q}\\
&\leq \left( \int_{n\in\mathbb{N}} \int_{k\in\mathbb{N}}f(n, k)^q\d\mu\d \nu\right)^\frac{1}{q}\sup_{n\geq 1}\left( \int_{k\in\mathbb{N}} 1 \d\mu \right)^\frac{q-1}{q}.
\end{align*}
However, for $n\geq 1$
\begin{align*}
\left( \int_{k\in\mathbb{N}} 1 \d\mu \right)^\frac{q-1}{q}&\leq \left( n\left(\frac{\Gamma(q)}{2^{q^2(n+1)}q^q}\right)^\frac{1}{q}+\sum_{k>n}\left(\frac{\Gamma(q)}{2^{q^2(k+2)}q^q}\right)^\frac{1}{q}  \right)^\frac{q-1}{q}\\
&=\left( \dfrac{\Gamma(q)}{q^q}\right)^\frac{q-1}{q^2}\left( \dfrac{n}{2^{q(n+1)}}+\sum_{k>n}\dfrac{1}{2^{q(k+2)}} \right)^\frac{q-1}{q}\\
&\leq \left( \dfrac{1}{2}\right)^\frac{q-1}{q} \left( \dfrac{\Gamma(q)}{q^q}\right)^\frac{q-1}{q^2}.
\end{align*}
Moreover, in the same way
\begin{align*}
\left( \int_{n\in\mathbb{N}} \int_{k\in\mathbb{N}}f(n, k)^q\d \mu\d \nu\right)^\frac{1}{q}&=\left( \sum_{k=1}^{\infty}\left( k\left(\frac{\Gamma(q)}{2^{q^2(k+2)}q^q}\right)^\frac{1}{q}+\sum_{n>k}\left(\frac{\Gamma(q)}{2^{q^2(n+1)}q^q}\right)^\frac{1}{q}  \right)\|d E_{k+1}(f)\|_X^q\right)^\frac{1}{q}\\
&\leq \left( \dfrac{1}{2}\right)^\frac{1}{q} \left( \dfrac{\Gamma(q)}{q^q}\right)^\frac{1}{q^2}S_q(f-\mathrm{F}(f)).
\end{align*}
Thus we obtain
$$ \left\|\sum\limits_{k\geq 1}^\infty r^{(k)}\right\|_{\ell_q}\leq \frac{1}{2}\left( \dfrac{\Gamma(q)}{q^q}\right)^\frac{1}{q}S_q(f-\mathrm{F}(f)). $$
	By the triangle inequality again, then it follows
	\begin{align*}
	G_{q, M}^T(f)& \geqslant \sk{ \sum_{n=1}^{\infty}\jdz{ R_{n, n} - \sk{ \sum_{k \neq n, k\geq 1} R_{n, k} } }^q }^{\frac{1}{q}} \geqslant \sk{ \sum_{n=1}^{\infty}\jdz{ r_n - \sk{ \sum_{k \geq 1} r_{n, k} } }^q }^{\frac{1}{q}} \\
	& = \norm{ r - \sk{ \sum_{k = 1}^{\infty} r^{(k)} }  }_{\ell_q} \geqslant \norm{r}_{\ell_q} - \norm{ \sum_{k = 1}^{\infty} r^{(k)} }_{\ell_q}  \\
	& \geq \left(\sk{ 1 - \frac{2}{2^{3q^2}} } \frac{\Gamma(q)}{q^q}\right)^\frac{1}{q}S_q(f-\mathrm{F}(f))-\frac{1}{2}\left( \dfrac{\Gamma(q)}{q^q}\right)^\frac{1}{q}S_q(f-\mathrm{F}(f))\\
	&\geq \left( \dfrac{1}{2}-\frac{2}{2^{3q^2}}\right) \left( \dfrac{\Gamma(q)}{q^q}\right)^\frac{1}{q}S_q(f-\mathrm{F}(f))\\
	&\geq \dfrac{1}{4} \left( \dfrac{\Gamma(q)}{q^q}\right)^\frac{1}{q}S_q(f-\mathrm{F}(f)).
	\end{align*}
Since $\left( \dfrac{\Gamma(q)}{q^q}\right)^\frac{1}{q}\gtrsim 1$ for all $q\geq 1$, it is easy to deduce that
	\begin{equation}\label{ie1}
	G_q^T(f) \geqslant G_{q, M}^T(f) \gtrsim S_q(f - {\rm F}(f)).
	\end{equation}
	
	For the reversing inequality, we have by the convexity of norms
	\begin{align*}
	G_q^T(f)^q & = \int_{0}^{\infty} \norm{\sum_{k = 1}^{\infty} t b_k e^{-t b_k} d E_{k+1}(f) }_X^q \frac{\d t}{t} \\
	& \leqslant  \int_{0}^{\infty} t^{q-1} \sk{\sum_{k=1}^{\infty} b_k e^{-t b_k} }^{q-1} \sum_{k=1}^{\infty} b_k e^{-t b_k}\norm{ d E_{k+1}(f) }_X^q \d t.
	\end{align*}
	To this end, we choose a positive number $ N > 2 $ such that $ b_{k+1}/b_k = m_{k+1}/l_{k} > N $ for all  $k\geq 1$. Then we have $ b_k \leqslant \frac{N}{N-1} (b_k - b_{k-1}) $, so
	$$
	\sum_{k=1}^{\infty} b_k e^{-t b_k} \leqslant \frac{N}{N-1} \sum_{k\geq1}\int_{b_{k-1}}^{b_k} e^{-tx} \d x\leq \frac{N}{N-1}\int_{0}^{\infty} e^{-tx} \d x \leqslant \frac{N}{(N-1)t}.
	$$
	Then
	\begin{align}\label{ie2}
	G_q^T(f)^q & \leqslant \sk{ \frac{N}{N-1} }^{q-1}\sum_{k=1}^{\infty}\mk{ \norm{ d E_{k+1}(f) }_X^q \int_{0}^{\infty} b_k e^{-t b_k} \d t }\notag\\
	& \leqslant \sk{ \frac{N}{N-1} }^{q-1}\sum_{k=1}^{\infty}\norm{ d E_{k+1}(f) }_X^q \notag \\
	& \leqslant \sk{ \frac{N}{N-1} }^{q-1} S_q(f-{\rm F}(f))^q.
	\end{align}
	
	Combine (\ref{ie1}) and (\ref{ie2}), and we finally obtain
	
	\begin{equation}\label{GS}
S_q(f-{\rm F}(f)) \approx G_q^T(f) .
	\end{equation}
	This completes the proof.
\end{proof}

Recall that a Banach space $X$ is of martingale cotype $q$ ($q\geq 2$) if there exists a positive constant $c$ such that every finite $X$-valued $L_{q}$-martingale $\left\{E_{n}(f)\right\}_{n\geq 1}$ satisfies the following inequality
\begin{equation}\label{e2.4}
\mathbb{E} [S_q(f)^q ] \leq c^{q} \sup _{n\geq 1} \mathbb{E}\left\|E_{n}(f)\right\|_{X}^{q},
\end{equation}
where $\mathbb{E}$ denotes the underlying expectation. See \cite{PG1} for more information about martingale cotype. Pisier's famous renorming theorem asserts that $X$ is of martingale cotype $q$ iff $X$ admits an equivalent norm that is  $q$-convex. Besides, he also proved that $X$ is of martingale cotype $q$ iff all $X$-valued Walsh-Paley martingales satisfy (\ref{e2.4}). See \cite{PG} for more details.

\begin{corollary}\label{co2.2}
  Assume $2\leq q<\infty$. If for every symmetric diffusion semigroup $\left\{T_{t}\right\}_{t>0}$ and for every $1<p<\infty$ there exists a constant $c$ such that
	\begin{equation}\label{e2.7}
	\left\|\left(\int_{0}^{\infty}\left\|t \frac{\partial}{\partial t} T_{t}(f)\right\|_{X}^{q} \frac{d t}{t}\right)^{\frac{1}{q}}\right\|_{L_{p}(\Omega)} \leq c\|f\|_{L_{p}(\Omega ; X)}, \quad \forall f \in L_{p}(\Omega ; X),
	\end{equation}
	then $X$ is of martingale cotype $q$.
	\end{corollary}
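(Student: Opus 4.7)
The strategy is to turn the Littlewood--Paley--Stein type hypothesis (\ref{e2.7}) into a martingale square function inequality via Theorem \ref{thm2.1}: given an arbitrary filtration, one builds a tailor-made symmetric diffusion semigroup whose $q$-square function is pointwise equivalent to the martingale $q$-square function of $f-\mathrm{F}(f)$, and then invokes (\ref{e2.7}) to bound the latter by the $L_q$-norm of $f$.

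Concretely, I would fix an arbitrary $X$-valued $L_q$-martingale $\{E_n(f)\}_{n=1}^N$ on a probability space $(\Omega,\mathscr{A},\mu)$ with respect to a filtration $\mathscr{A}_1\subset\cdots\subset\mathscr{A}_N\subset\mathscr{A}$, and let $f=E_N(f)\in L_q(\Omega;X)$. Theorem \ref{thm2.1} supplies a sequence $\{a_n\}_{n\ge 0}$ and a symmetric diffusion semigroup $\{T^t\}_{t>0}$ on $(\Omega,\mathscr{A},\mu)$, built from this filtration, together with universal positive constants $c,C>0$ such that
$$c\, S_q\bigl(f-\mathrm{F}(f)\bigr) \;\le\; G_q^T(f) \;\le\; C\, S_q\bigl(f-\mathrm{F}(f)\bigr), \qquad \mathrm{F}=E_1.$$
Applying hypothesis (\ref{e2.7}) with $p=q$ to this semigroup yields $\|G_q^T(f)\|_{L_q(\Omega)} \le c_q\,\|f\|_{L_q(\Omega;X)}$, and combining the two inequalities gives
$$\left\| \Bigl(\sum_{n=2}^{N}\|dE_n(f)\|_X^q\Bigr)^{1/q} \right\|_{L_q(\Omega)} = \|S_q(f-\mathrm{F}(f))\|_{L_q(\Omega)} \;\lesssim\; \|f\|_{L_q(\Omega;X)}.$$

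It remains to add back the missing $n=1$ martingale difference. Since $dE_1(f)=E_1(f)$ and $E_1$ is a contraction on $L_q(\Omega;X)$, Fubini's theorem yields
$$\|S_q(f)\|_{L_q(\Omega)}^{q} \;=\; \|E_1(f)\|_{L_q(\Omega;X)}^{q} + \left\|\Bigl(\sum_{n=2}^{N}\|dE_n(f)\|_X^q\Bigr)^{1/q}\right\|_{L_q(\Omega)}^{q} \;\lesssim\; \|f\|_{L_q(\Omega;X)}^{q}.$$
Since $\|f\|_{L_q(\Omega;X)} = \|E_N(f)\|_{L_q(\Omega;X)} \le \sup_{n\ge 1}\|E_n(f)\|_{L_q(\Omega;X)}$, this is precisely (\ref{e2.4}) for the arbitrary martingale we started with, so $X$ is of martingale cotype $q$.

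The main conceptual step is the very first one, where Theorem \ref{thm2.1} delivers the required bridge between the semigroup and martingale square functions for the tailor-made semigroup. Everything after that is essentially bookkeeping (reinserting the first martingale difference via contractivity of $E_1$), and no real obstacle arises since the hypothesis (\ref{e2.7}) is assumed for every $1<p<\infty$, so we may freely take $p=q$ and avoid any interpolation or extrapolation argument.
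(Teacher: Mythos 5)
Your proof follows the same route as the paper's (which simply states the corollary is ``an immediate consequence of Theorem \ref{thm2.1} and the definition of martingale cotype''), and the mechanics are correct: build the tailor-made semigroup from the martingale's filtration, apply \eqref{e2.7} with $p=q$, use the pointwise equivalence \eqref{Gp-Sp}, and reinsert the first martingale difference $dE_1(f)=E_1(f)$ via contractivity of $E_1$. That last Fubini/contractivity step is a nice explicit touch the paper leaves implicit.

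There is, however, a quantifier subtlety you should address. In \eqref{e2.7} the constant $c$ is allowed to depend on the semigroup (and on $p$). The semigroup produced by Theorem \ref{thm2.1} depends on the filtration $\{\mathscr{A}_n\}$ you start from, so your argument as written yields a cotype constant $c_{\mathscr{A}}$ that could a priori vary with the filtration, whereas the definition \eqref{e2.4} of martingale cotype $q$ demands a single constant valid for all finite $X$-valued $L_q$-martingales. The standard repair --- and what the remark following Corollary \ref{co2.2} alludes to --- is to invoke Pisier's theorem (cited in the paper) that it suffices to verify \eqref{e2.4} for Walsh--Paley martingales. These all live on one fixed probability space with one fixed (dyadic) filtration, so Theorem \ref{thm2.1} produces one fixed semigroup, \eqref{e2.7} gives one fixed constant, and your computation then yields a uniform cotype constant. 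With that reduction stated, the proof is complete.
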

\begin{proof}
	It is an immediate consequence of Theorem \ref{thm2.1} and the definition of martingale cotype.
	\end{proof}

\begin{rem}
	Corollary \ref{co2.2} was first shown by Xu in \cite[Theorem 3.1]{Xu2}. There he constructed a highly lacunary Fourier series to show that all Walsh-Paley martingales satisfied (\ref{e2.4}) only by using the Poisson semigroup on the unit disk. Hence, from his proof, $X$ is martingale cotype $q$ iff the Poisson semigroup satisfies (\ref{e2.7}).
	\end{rem}

\begin{rem}
	Here we use Theorem \ref{thm2.1} to give an alternative proof of  ``if'' part of \cite[Theorem A]{Xu1} or \cite[Theorem 2.1]{Xu3}. However, our proof needs much additional and stronger conditions on semigroups.
	\end{rem}

\begin{rem}
	From Theorem \ref{thm2.1}, the best constants of martingale inequalities can be dominated by the best constants of  the Littlewood-Paley-Stein inequality. Accordingly, we can use martingale inequities to obtain the Littlewood-Paley-Stein inequality, which is due to Stein \cite[Chapter IV.5]{St}.
	\end{rem}

\bigskip

\section{Proof of Theorem \ref{nthm}}

We end this article with the proof of Theorem \ref{nthm}. Although it is similar to that of Theorem \ref{Theorem A}, we give some details for reader's convenience.

Note that for $1<p<\8$, any noncommutative symmetric diffusion semigroup $\{\sfT_t\}_{t>0}$ is analytic. See \cite{JLX} for details. Then the column and row square functions are well-defined.
\begin{proof}[Proof of Theorem \ref{nthm}]
At first, we show that $\{T^t\}_{t>0}$ is a noncommutative symmetric diffusion semigroup by the same method as Theorem \ref{thm1.1}. More precisely, the condition $(\mathsf{a})$ follows from the fact that all conditional expectations on $\M$ are unital normal and completely positive. As for the condition $(\mathsf{b})$, observe that $T_t(x)\rightarrow x$ for $x\in \M_n\ (\forall n\geq 1)$ as $t\rightarrow0^+$ in the  $w^{*}$-topology of $\M$, and then use the fact that $\cup_{n\geq 1}\M_n$ is  $w^{*}$-dense in $\M$. 

The condition $(\mathsf{c})$ is due to the fact that all conditional expectations on $\M$ are selfadjoint. The condition $(\mathsf{d})$ follows from the fact that any conditional expectation can extend to a completely contractive operator from $L_p(\M)$ to $L_p(\M)$ for $1\leq p<\8$. Hence, we conclude that  $\{T^t\}_{t>0}$ is a noncommutative symmetric diffusion semigroup.

Then it suffices to prove (\ref{e1.7.1}) and (\ref{e1,7.2}). Observe that if we have
    \begin{equation}\label{4.1}
    	  \dfrac{7}{60}S_c(x-\mathrm{F}(x))^2 \leq G^T_c(x)^2\leq \dfrac{23}{60}S_c(x-\mathrm{F}(x))^2,
    \end{equation}
    then the desired inequality follows immediately since the square root function is operator increasing. We only need to consider the case where $x\in L_p(\M_n)$ for a fixed arbitrary $n\in \mathbb{N}$. By the same calculation as (\ref{1.2}), we obtain
    $$
    G^T_c(x)^2=\sum_{i,j=2}^{n}\dfrac{\ln(1-a_{i-1})\ln(1-a_{j-1})}{\ln^2\left[(1-a_{i-1})(1-a_{j-1})\right]} d x_i^* d x_j.$$
    Since $ dx_i^*, dx_j $ and $ d x_i^* d x_j$ $(2\leq i, j\leq n)$ are all measurable operators affiliated to $\M$, they have a common domain denoted by $\mathcal{K}$ which is a dense subspace of $\mathcal{H}$.
    
    For any $\xi\in \mathcal{K}$, 
    \begin{equation*}
    \begin{aligned}
    	\la G^T_c(x)^2(\xi), \xi\ra&=\sum_{i,j=2}^{n}\dfrac{\ln(1-a_{i-1})\ln(1-a_{j-1})}{\ln^2\left[(1-a_{i-1})(1-a_{j-1})\right]} \la d x_i^* d x_j (\xi), \xi\ra\\
    	&=\sum_{i,j=2}^{n}\dfrac{\ln(1-a_{i-1})\ln(1-a_{j-1})}{\ln^2\left[(1-a_{i-1})(1-a_{j-1})\right]} \la d x_j (\xi), d x_i(\xi)\ra,
    	\end{aligned}
    	\end{equation*}
    	which implies that
    	$$   \dfrac{7}{60}\sum_{i = 2}^n \la d x_{i}(\xi), d x_{i}(\xi)\ra  \leq 	\la G^T_c(x)^2(\xi), \xi\ra\leq \dfrac{23}{60}\sum_{i = 2}^n \la d x_{i}(\xi), d x_{i}(\xi)\ra. $$
    	
    	Using the fact that $\la d x_{i}(\xi), d x_{i}(\xi)\ra =\la d x_{i}^* d x_{i}(\xi), (\xi)\ra $, we obtain the desired inequality (\ref{4.1}). As for the row square function, it can be deduced by the same way.
\end{proof}

We introduce the norms in the Hardy spaces of martingales defined in \cite{PX1} and \cite{PX2}. Let $1 \leq p \leq \infty$ and $x=\left(x_{n}\right)_{n \geq 1}$ be an $L_{p}$-martingale. Set, for $p \geq 2$
$$
\|x\|_{\mathcal{H}^{p}}=\max \left\{\left\|S_c(x)\right\|_{p},\left\|S_r(x)\right\|_{p}\right\}
$$
and for $p<2$
$$
\|x\|_{\mathcal{H}^{p}}=\inf \left\{\left\|S_c(y)\right\|_{p}+\left\|S_r(z)\right\|_{p}\right\},
$$
where the infimum runs over all decompositions $x=y+z$ of $x$ as sums of two $L_{p}$ martingales.

\

 Similarly, we introduce the semigroup analogues of these norms. Let $\| \cdot \|_{p, F}$ denote the $p$-norm of semigroup square function defined as follows: for $x\in L_p(\M)$
 $$ \quad\|x\|_{p, F}=\max \left\{\|G_c^T(x)\|_p, \ \|G_r^T(x)\|_p \right\} \ \ \  \text{if} \ \ \ 2 \leq p<\infty, $$
 and
 $$ \|x\|_{p, F}=\inf \left\{ \| G_c^T(y)\|_p+\|G_r^T(z)\|_p : x=y+z \right\}  \ \ \ \text{if} \ \ \ 1 \leq p<2. $$
Note that our definition of $\|x\|_{p, F}$ is different from that of $\|x\|_{p, F}$ in \cite[Chapter 6]{JLX} when $1\leq p<2$.

\begin{rem}
	 It should be noted that when $1\leq p<\8$, we have the same equivalence for the $p$-norm  of their corresponding square functions like the classical setting for the noncommutative symmetric diffusion semigroup in Theorem \ref{nthm}, i.e. 
	$$ \sqrt{\dfrac{7}{60}}\ \|x\|_{\mathcal{H}_p}\leq \|x\|_{p, F}\leq \sqrt{\dfrac{23}{60}}\ \|x\|_{\mathcal{H}_p}, \ \ \ \forall x\in L_p(\M), $$
	which follows from (\ref{e1.7.1}) and (\ref{e1,7.2}).
\end{rem}

\bigskip {\textbf{Acknowledgments.}} It is Professor Quanhua Xu who proposes to us the subject of this article.
\bigskip

	\end{document}